\documentclass[11pt]{amsart}
\usepackage{ amsmath, amsthm, amsfonts, hyperref, graphicx, ifpdf}
\usepackage{mathrsfs,epsfig}
\usepackage[dvipsnames,usenames]{color}
\hypersetup{
   unicode=false,          
   pdftoolbar=true,        
   pdfmenubar=true,        
   pdffitwindow=false,     
   pdfstartview={},    
   pdftitle={},    
   pdfauthor={},     
   pdfsubject={},   
   pdfcreator={},   
   pdfproducer={}, 
   pdfkeywords={}, 
   pdfnewwindow=true,      
   colorlinks=true,       
   linkcolor=blue,          
   citecolor=blue,        
   filecolor=blue,      
   urlcolor=blue          
}

\newtheorem{theorem}{Theorem}[section]
\newtheorem{cor}[theorem]{Corollary}
\newtheorem{lem}[theorem]{Lemma}
\newtheorem{pro}[theorem]{Proposition}
\newtheorem{remark}[theorem]{Remark}
\newtheorem{Def}[theorem]{Definition}
\theoremstyle{definition}

\DeclareMathOperator{\Isom}{\mathsf{Isom}}%

\newcommand{\Bcal}{\mathcal{B}}

\newcommand{\Bd}{Beltrami differential}
\newcommand{\cts}{cotangent space}

\newcommand{\co}{curvature operator}
\newcommand{\ct}{curvature tensor}
\newcommand{\cs}{conformal structure}
\newcommand{\csi}{Cauchy-Schwarz inequality}

\newcommand{\Gf}{Green's function}
\newcommand{\hd}{H^{-1,1}(\D)}

\newcommand{\Hm}{Hilbert manifold}
\newcommand{\Hs}{Hilbert structure}
\newcommand{\hf}{holomorphic function}
\newcommand{\hqd}{holomorphic quadratic differential}

\newcommand{\hym}{hyperbolic metric}

\newcommand{\npd}{non-positive definite}
\newcommand{\ob}{orthonormal basis}
\newcommand{\Od}{\Omega^{-1,1}(\D)}

\newcommand{\Rc}{Ricci curvature}
\newcommand{\rc}{Riemannian curvature}
\newcommand{\rco}{Riemannian curvature operator}
\newcommand{\RS}{Riemann surface}
\newcommand{\sect}{sectional curvature}
\newcommand{\ts}{tangent space}

\newcommand{\TS}{Teichm\"{u}ller space}
\newcommand{\Tt}{Teichm\"{u}ller theory}

\newcommand{\thm}{twisted harmonic map}

\newcommand{\UTS}{universal Teichm\"{u}ller space}

\newcommand{\WP}{Weil-Petersson}
\newcommand{\wpm}{Weil-Petersson metric}
\newcommand{\wrt}{with respect to}

\newcommand{\be}{\begin{equation}}
\newcommand{\ene}{\end{equation}}
\newcommand{\br}{\begin{remark}}
\newcommand{\er}{\end{remark}}
\newcommand{\bl}{\begin{lem}}
\newcommand{\el}{\end{lem}}
\newcommand{\bcor}{\begin{cor}}
\newcommand{\ecor}{\end{cor}}
\newcommand{\bpro}{\begin{pro}}
\newcommand{\epro}{\end{pro}}
\newcommand{\ben}{\begin{enumerate}}
\newcommand{\een}{\end{enumerate}}
\newcommand{\bp}{\begin{proof}}
\newcommand{\ep}{\end{proof}}
\newcommand{\bpo}{\begin{pro}}
\newcommand{\epo}{\end{pro}}
\newcommand{\beq}{\begin{equation*}}
\newcommand{\eeq}{\end{equation*}}
\newcommand{\bear}{\begin{eqnarray}}
\newcommand{\eear}{\end{eqnarray}}
\newcommand{\beqar}{\begin{eqnarray*}}
\newcommand{\eeqar}{\end{eqnarray*}}
\newcommand{\bt}{\begin{theorem}}
\newcommand{\et}{\end{theorem}}

\newcommand{\C}{\mathbb{C}}
\newcommand{\R}{\mathbb{R}}
\newcommand{\J}{{\bf{J}}}

\newcommand{\Q}{\tilde{Q}}
\newcommand{\D}{\mathbb{D}}
\newcommand{\bi}{{\textbf{i}}}

\newcommand{\Acal}{\mathcal{A}}

\newcommand{\Tcal}{\mathcal{T}}

\newcommand{\ddl}[2]{\frac{d{#1}}{d{#2}}}

\newcommand{\ppl}[2]{\frac{\partial{#1}}{\partial{#2}}}

\numberwithin{equation}{section}

\allowdisplaybreaks


\def\XXint#1#2#3{{\setbox0=\hbox{$#1{#2#3}{\int}$}
    \vcenter{\hbox{$#2#3$}}\kern-.5\wd0}}

\makeatletter
\def\@citestyle{\m@th\upshape\mdseries}
\def\citeform#1{{\bfseries#1}}
\def\@cite#1#2{{%
  \@citestyle[\citeform{#1}\if@tempswa, #2\fi]}}
\@ifundefined{cite }{%
  \expandafter\let\csname cite \endcsname\cite
  \edef\cite{\@nx\protect\@xp\@nx\csname cite \endcsname}%
}{}
\makeatother

\begin{document}
\title[WEIL-PETERSSON Curvature operator]
{The Weil-Petersson curvature operator on the universal Teichm\"uller space}


\author{Zheng Huang}
\address[Z. ~H.]{Department of Mathematics, The City University of New York, Staten Island, NY 10314, USA}
\address{The Graduate Center, The City University of New York, 365 Fifth Ave., New York, NY 10016, USA}
\email{zheng.huang@csi.cuny.edu}

\author{Yunhui Wu}
\address[Y. ~W.]{ Yau Mathematical Sciences Center, Tsinghua University, Haidian District, Beijing 100084, China}
\email{yunhui\_wu@mail.tsinghua.edu.cn}



\subjclass[2010]{Primary 30F60, Secondary 32G15}


\begin{abstract} 
The {\UTS} is an infinitely dimensional generalization of the classical {\TS} of {\RS}s. It carries a natural {\Hs}, on 
which one can define a natural Riemannian metric, the {\wpm}. In this paper we investigate the {\WP} {\rco} 
$\tilde{Q}$ of the {\UTS} with the {\Hs}, and prove the following:
\ben
\item
$\tilde{Q}$ is {\npd}.
\item
$\tilde{Q}$ is a bounded operator.
\item
$\tilde{Q}$ is not compact; the set of the spectra of $\tilde{Q}$ is not discrete.
\een

As an application, we show that neither the Quaternionic hyperbolic space nor the Cayley plane can be totally geodesically immersed in 
the {\UTS} endowed with the {\WP} metric.
\end{abstract}


\maketitle

\section{Introduction}


\subsection{The {\WP} geometry on classical {\TS}} Moduli theory of {\RS}s and their generalizations continue 
to be inspiration for ideas and questions for many different mathematical fields since the times of 
Gauss and Riemann. In this paper, we study the {\WP} geometry of the {\UTS}.

Let $S_g$ be a closed oriented surface of genus $g$ where $g \ge 2$, and $\Tcal_g(S)$ be the {\TS} of 
$S_g$ (space of {\hym}s on $S_g$ modulo orientation preserving diffeomorphisms isotopic to the identity). 
The {\TS} $\Tcal_g(S)$ is a manifold of complex dimension $3g-3$, with its {\cts} at 
$(S,\sigma(z)|dz|^2) \in \Tcal_g(S)$ identified as the space of {\hqd}s $\phi(z)dz^2$ on the {\cs} of the 
{\hym} $\sigma(z)|dz|^2$. The {\wpm} on {\TS} is obtained by duality from the natural $L^2$ pairing of {\hqd}s. 
The {\WP} geometry of {\TS} has been extensively studied: it is a K\"{a}hlerian metric \cite{Ahl61}, 
incomplete \cite{Chu76, Wol75} yet geodesically convex \cite{Wol87}. Many features of the curvature 
property were also studied in detail by many authors (see a comprehensive survey \cite{Wlp11} and the 
book \cite{Wlp10}). Since intuitively we consider the {\UTS} contains {\TS}s of all genera, among those {\WP} 
curvature features; it is known that the Weil-Petersson metric has negative sectional curvature, with an explicit formula for 
the {\rc} tensor due to Tromba-Wolpert \cite{Tro86, Wlp86}, strongly negative curvature in the sense of Siu \cite{Schu86}, 
dual Nakano negative curvature \cite{LSY08}, various curvature bounds in terms of the 
genus \cite{Hua07b, Teo09, Wu16}, good behavior of the Riemannian {\co} on {\TS} \cite{Wu14, WW15}. One can also 
refer to \cite{BF06, Hua05, Hua07a, LSY04, LSYY13, Wlp11, Wlp10, Wlp12} for other aspects of the curvatures of the {\wpm}.      
\subsection{Main results} There are several well-known models of {\UTS}s. We will adapt the approach in \cite{TT06} and use the disk model to 
define the {\UTS} $T(1)$ as a quotient of the space of bounded {\Bd}s on the unit disk $\D$. Unlike the case in the 
classical {\TS}, the Petersson pairing for the bounded {\Bd}s on $\D$ is not well-defined on the whole {\ts} of the 
{\UTS} $T(1)$. To ramify this, Takhtajan-Teo \cite{TT06} defined a Hilbert structure on $T(1)$ such that the 
Petersson pairing is now meaningful on the {\ts} at any point in this Hilbert structure. We denote the {\UTS} 
with this Hilbert structure by $T_H(1)$. The resulting metric is the {\wpm} on $T_H(1)$. All terms will be 
defined rigorously in \S 2.

The Riemannian geometry of this infinitely dimensional deformation space $T_H(1)$ is very intriguing. Takhtajan-Teo 
showed the {\WP} metric on $T_H(1)$ has negative {\sect}, and constant {\Rc} \cite{TT06}, and Teo \cite{Teo09} proved 
the holomorphic {\sect} has no negative upper bound.

We are interested in the {\WP} {\co} on $T_H(1)$. In general there are some fundamental questions regarding linear operators on manifolds: 
whether the operator is signed, whether it is bounded, and the behavior of its eigenvalues. In this paper, we investigate the {\WP} {\co} 
along these question lines. In particular, we prove:

\bt\label{mt-1}
Let $\tilde{Q}$ be the {\WP} {\rco} on the {\UTS} $T_H(1)$, then 
\ben
\item
$\tilde{Q}$ is non-positive definite on $\wedge^2TT_H(1)$.
\item
For $C \in \wedge^2TT_H(1)$, $\Q(C, C) = 0$ if and only if there is an element $E \in \wedge^2TT_H(1)$ such that 
$C = E - \J\circ E$, where $\J\circ$ is defined above. 
\een
\et

As a direct corollary, we have:
\bcor\cite{TT06}
The {\sect} of the {\wpm} on $T_H(1)$ is negative.
\ecor

Our second result is:
\bt\label{mt-2}
The {\co} $\Q$ is bounded. More precisely, for any $V \in \wedge^2TT_H(1)$ with $\|V\|_{eu} =1$, we have 
$|\Q(V,V)|\le 16\sqrt{\frac{3}{\pi}}$, where $\|\cdot\|_{eu}$ is the Euclidean norm for the wedge product defined in \eqref{eu}. 
\et


A direct consequence of Theorem ~\ref{mt-2} is:
\bcor\cite{GBR15}
The Riemannian {\WP} curvature tensor (defined in \eqref{ct3}) is bounded.
\ecor

Being bounded and non-positively definite are properties for the {\WP} {\co} on certain part of the classical {\TS} as well 
\cite{Wu14, WW15}, but noncompactness of $\tilde{Q}$ is a more distinctive feature for $T_H(1)$. Our next result is:
\bt\label{mt-3}
The {\co} $\Q$ is not a compact operator, more specifically, the set of spectra of $\Q$ is not discrete on the interval $[-16\sqrt{\frac{3}{\pi}},0)$.
\et

As an important application, in the last part of this paper we will address some rigidity questions on harmonic maps from certain symmetric 
spaces into $T_H(1)$. For harmonic map from a domain, which is either the Quaternionic hyperbolic space or the Cayley plane, into a 
non-positive curved target space, many beautiful rigidity results were established in \cite{DM15, GS92, JY97, MSY93} and others. We prove the following: 
\begin{theorem}\label{mt-4}
Let $\Gamma$ be a lattice in a semisimple Lie group $G$ which is either $Sp(m,1)$ or $F_{4}^{-20}$, and let $\Isom(T_H(1))$ be the isometry 
group of $T_H(1)$ {\wrt} the {\wpm}. Then, any {\thm} $f$ from $G/\Gamma$ into $T_H(1)$ must be a constant, {\wrt} each homomorphism 
$\rho: \Gamma \rightarrow \Isom(T_H(1))$. Here the twisted map $f$ {\wrt} $\rho$ means that 
$f(\gamma \circ Y)=\rho (\gamma)\circ f(Y)$, for all $\gamma \in \Gamma$.
\end{theorem}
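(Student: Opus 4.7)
The plan is to invoke the Matsushima--Corlette--Mok-Siu-Yeung rigidity scheme for equivariant harmonic maps from quaternionic or Cayley hyperbolic lattices, using Theorem~\ref{mt-1} as the key curvature input on the target. The non-positivity of the {\WP} curvature operator $\Q$ on $T_H(1)$ is precisely what is needed to drive the Bochner--Weitzenb\"ock argument of \cite{MSY93, JY97} to the conclusion $d\tilde{f} \equiv 0$.

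First I would lift $f$ to a $\rho$-equivariant harmonic map $\tilde{f} : X \to T_H(1)$, where $X = G/K$ is the associated symmetric space (the quaternionic hyperbolic space or the Cayley hyperbolic plane). Following \cite{MSY93}, one then establishes a Matsushima-type integral identity of the form
\[
\int_{G/\Gamma} |\mathcal{D}\tilde{f}|^{2}\, dV = -c \int_{G/\Gamma} \Q\bigl(d\tilde{f}\wedge d\tilde{f},\, d\tilde{f}\wedge d\tilde{f}\bigr)\, dV,
\]
where $c>0$ is a combinatorial constant and $\mathcal{D}$ is an algebraic first-order differential operator on sections of $\tilde{f}^{*}TT_H(1)$, built from $\nabla d\tilde{f}$ together with the parallel quaternionic (resp. octonionic) structure on $X$. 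The left-hand side is manifestly $\ge 0$, while Theorem~\ref{mt-1} forces the right-hand side to be $\le 0$; both sides therefore vanish.

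From the vanishing of the right-hand side and part (ii) of Theorem~\ref{mt-1}, the 2-vector $d\tilde{f}\wedge d\tilde{f}$ must lie pointwise in the kernel $\{E - \J\circ E\}$ of $\Q$. Simultaneously $\mathcal{D}\tilde{f} \equiv 0$ is an algebraic relation that couples $d\tilde{f}$ to the parallel quaternionic or octonionic structure on the domain. As observed in \cite{MSY93}, these two constraints are incompatible unless $d\tilde{f}$ itself vanishes: the parallel quaternionic (or octonionic) triple on the domain is too rigid to be absorbed into the single almost complex direction available on the K\"ahler target. Hence $\tilde{f}$, and therefore $f$, must be constant.

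The main obstacle is bridging the real non-positivity of $\Q$ supplied by Theorem~\ref{mt-1} with the \emph{Hermitian} non-positivity of the complexified curvature tensor that is traditionally demanded by the Mok-Siu-Yeung identity; the refined kernel description in Theorem~\ref{mt-1}(ii), together with the K\"ahler structure of $T_H(1)$, is what I would exploit to cross this gap. Secondary difficulties include justifying the Bochner--Weitzenb\"ock integration by parts for a twisted harmonic map into the infinite-dimensional, incomplete Hilbert manifold $T_H(1)$, and controlling the cusps of the finite-volume quotient $G/\Gamma$ through truncation in the style of \cite{GS92, DM15}.
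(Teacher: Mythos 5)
Your overall strategy is the right one and matches the paper's in its first half: reduce to a $\rho$-equivariant harmonic map, feed the non-positivity of $\Q$ from Theorem~\ref{mt-1} into the Mok--Siu--Yeung machinery, and then use the kernel description in Theorem~\ref{mt-1}(ii) together with the quaternionic (resp.\ octonionic) structure of the domain to force $d\tilde f=0$. The paper does not re-derive the Matsushima-type integral identity; it cites \cite[Theorem 2]{MSY93} as a black box, whose conclusion is the dichotomy \emph{constant or totally geodesic immersion} (with a remark that the proof persists for an infinite-dimensional target). This sidesteps the analytic worries you raise at the end about integration by parts on $T_H(1)$ and truncation at cusps.

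The genuine gap is in your final step. You assert that $\mathcal{D}\tilde f\equiv 0$ together with the kernel condition on $d\tilde f\wedge d\tilde f$ is ``incompatible unless $d\tilde f$ vanishes,'' attributing this to \cite{MSY93}; but \cite{MSY93} only delivers that $\tilde f$ is totally geodesic, not constant, and ruling out a totally geodesic immersion of $H_{Q,m}$ (or the Cayley plane) into $T_H(1)$ is precisely the new content that requires Theorem~\ref{mt-1}(ii). The paper's argument here is concrete and you should supply its analogue: pick a quaternionic line spanned by $v, Iv, Jv, Kv$ in the image, arrange that $J$ agrees with the complex structure of $T_H(1)$, and consider $\omega = v\wedge Jv + Kv\wedge Iv$. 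The quaternionic curvature symmetries give $Q^{H_{Q,m}}(\omega,\omega)=0$, hence $\Q(\omega,\omega)=0$ by total geodesy; Theorem~\ref{mt-1}(ii) then writes $\omega = E - \J\circ E$, which forces $\J\circ\omega = -\omega$. But computing $\J\circ\omega$ directly from the quaternionic relations ($JJv=-v$, $JKv=Iv$, $JIv=-Kv$) gives $\J\circ\omega = +\omega$, so $\omega=0$, contradicting the linear independence of $v,Iv,Jv,Kv$. Without this (or an equivalent) computation, the statement that the quaternionic triple ``cannot be absorbed into the single almost complex direction'' remains an unproved assertion, and the proof is incomplete.
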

\subsection{Methods in the proofs}

An immediate difficulty we have to cope with is that $T_H(1)$ is an infinite dimensional manifold. There is 
however a basis for tangent vectors for the Hilbert structure that we can work with. With this basis, the {\WP} 
{\rc} tensor takes an explicit form. To prove the first two results, we need to generalize techniques developed in \cite{Wu14, WW15} 
carefully and rigorously to the case of infinite dimensional Hilbert spaces.

Proof of the Theorem ~\ref{mt-3} is different. We prove a key estimate for the operator on an $n$-dimensional subspace 
(Proposition ~\ref{4-4}), then bound the spectra of the {\co} by the corresponding spectra of its projection onto this subspace to derive a contradiction.

\subsection{Plan of the paper}
The organization of the paper is as follows: in \S 2, we set up notations and preliminaries, in particular, we 
restrict ourselves in the classical setting to define {\TS} of closed surfaces and the {\wpm} in \S2.1, its {\co} on 
{\TS} is set up in \S2.2, then we define the {\UTS} and its Hilbert structure in \S 2.3, and introduce the basis 
for tangent vectors for the $T_H(1)$, and describe the {\WP} {\rco} on the {\UTS} in \S2.4. Main theorems are proved in sections \S3, \S 4 and \S 5. And in 
the last section \S 6 we prove Theorem ~\ref{mt-4}.

\subsection{Acknowledgment} We acknowledge supports from U.S. national science foundation 
grants DMS 1107452, 1107263, 1107367 ``RNMS: Geometric Structures and Representation 
varieties" (the GEAR Network). This work was supported by a grant from the Simons Foundation 
(\#359635, Zheng Huang) and a research award from the PSC-CUNY. Part of the work is completed when the second named author was a G. C. Evans 
Instructor at Rice University, he would like to thanks to the mathematics department for their support. He would also like to acknowledge a 
start-up research fund from Tsinghua University to finish this work. The authors would like to thank an anonymous referee whose comments are very helpful to impove the paper.
\section{Preliminaries}
\subsection{{\TS} and its {\wpm}}
Let $\D$ be the unit disk with the Poincar\'e metric, and $S$ be a closed oriented surface of genus $g > 1$. 
Then by the uniformization theorem we have a hyperbolic structure $X = \D\backslash\Gamma$ on $S$, where 
$\Gamma \subset PSL(2,\R)$ is a Fuchsian group, and $PSL(2,\R)$ is the group of orientation preserving 
isometries of $\D$. Writing $\{z\}$ as the complex coordinate on $\D$, the Poincar\'e metric is explicitly given as  
\beq
\rho(z) = \frac{4}{(1-|z|^2)^2}dzd\bar{z}.
\eeq
It descends to a {\hym} on the {\RS} $X=\D\backslash\Gamma$, which we denote by $\sigma(z)|dz|^2$. Spaces 
of {\Bd}s and {\hqd}s on {\RS}s play a fundamental role in {\Tt}, and let us describe these spaces.
\ben
\item
$\Acal^{-1,1}(X)$: the space of bounded {\Bd}s on $X=\D\backslash\Gamma$. A {\Bd} on a {\RS} is a $(-1,1)$ 
form in the form of $\mu(z)\ddl{\bar{z}}{z}$, where $\mu(z)$ is a function on $\D$ satisfying:
\beq
\mu(\gamma(z))\frac{\overline{\gamma'(z)}}{\gamma'(z)} = \mu(z), \ \ \ \forall \gamma \in \Gamma.
\eeq
\item
$\Bcal^{-1,1}(X)$: the unit ball of $\Acal^{-1,1}(X)$, namely,
\beq
\Bcal^{-1,1}(X) = \{\mu(z)\ddl{\bar{z}}{z}\in \Acal^{-1,1}(X): \|\mu\|_{\infty} = sup_{z\in\D}|\mu(z)|<1 \}.
\eeq
\item
$Q(X)$: the space of {\hqd}s on $X$. A {\hqd} is a $(2,0)$ form taking the form $q(z)dz^2$, where $q(z)$ is 
a holomorphic function on $\D$ satisfying:
 \beq
q(\gamma(z))[\gamma'(z)]^2 = q(z), \ \ \ \forall \gamma \in \Gamma.
\eeq
It is a basic fact in {\RS} theory that $Q(X)$ is a Banach space of real dimension $6g-6$.
\item
$\Omega^{-1,1}(X)$: the space of {\it harmonic {\Bd}s} on $X$. A {\Bd} $\nu(z)\ddl{\bar{z}}{z} \in \Acal^{-1,1}(X)$ 
is harmonic if there is a {\hqd} $q(z)dz^2 \in Q(X)$ such that 
 \be\label{nu}
\nu(z)\ddl{\bar{z}}{z} = \frac{\overline{q(z)dz^2}}{\sigma(z)|dz|^2},
\ene
where $\sigma(z)|dz|^2$ is the {\hym} on $X$. Seeing from $\D$, the space $\Omega^{-1,1}(X)$ consists of 
functions 
\be\label{nu2}
\nu(z) = \frac{(1-|z|^2)^2}{4}\overline{q(z)}.
\ene
\een
The {\TS} $\Tcal_g(S)$ is the space of {\hym}s on the surface $S$, modulo orientation preserving 
biholomorphisms. Real analytically $\Tcal_g(S)$ is isomorphic to $\Bcal^{-1,1}(X)\backslash\sim$, where two 
{\Bd}s are equivalent if the unique quasiconformal maps between the extended complex plane coincide on 
the unit circle. At each point $X \in \Tcal_g(S)$, its tangent space is identified as the space $\Omega^{-1,1}(X)$, 
while the {\cts} at $X$ is identified as the space $Q(X)$.

Given two tangent vectors $\mu(z)\ddl{\bar{z}}{z}$ and $\nu(z)\ddl{\bar{z}}{z}$ in $\Omega^{-1,1}(X)$, the {\wpm} 
is defined as the following (Petersson) pairing:
\be\label{wpm}
\langle\mu,\nu\rangle_{WP} = \int_{X=\D\backslash\Gamma}\mu\bar\nu dA,
\ene
where $dA = \sigma|dz|^2$ is the hyperbolic area element on $X$. Writing as a metric tensor, we have 
\beq
g_{i\bar{j}} =  \int_{X}\mu_i\bar{\nu}_j dA,
\eeq
This is a Riemannian metric with many nice properties. There is an explicit formula for its {\ct} due to 
Tromba-Wolpert (\cite{Tro86,Wlp86}): 
\be\label{ct}
R_{i\bar{j}k\bar{\ell}} = \int_X D(\mu_i\bar{\mu}_j)(\mu_k\bar{\mu}_\ell)dA + 
\int_X D(\mu_i\bar{\mu}_\ell)(\mu_k\bar{\mu}_j)dA .
\ene
Here the operator $D$ is defined as 
\be\label{D}
D = -2(\Delta - 2)^{-1},
\ene
where $\Delta = \frac{-4}{\sigma(z)}\partial_z\partial_{\bar{z}}$ is the Laplace operator on $X$ {\wrt} the {\hym} $\sigma(z)dA$. This operator $D$ is 
fundamental in {\Tt}, and the following is well-known (see for instance \cite{Wlp86}):
\bpro\label{D2}
The operator $D = -2(\Delta - 2)^{-1}$ is a positive, self-adjoint operator on $C^{\infty}(X)$. Furthermore, 
let $G(w,z)$ be a Green's function for $D$, then $G(w,z)$ is positive, and $G(w,z) = G(z,w)$: 
$\forall f \in C^\infty(X)$, we have 
\be\label{green}
D(f)(z) = \int_{w\in X}G(z,w)f(w)dA(w).
\ene
\epro
To simplify our calculations, we introduce the following notation:
\begin{Def}
For any element $\mu$'s in the {\ts} $\Omega^{-1,1}(X)$, we set:
\be\label{sim}
(i\bar{j},k\bar{\ell}) =\int_X D(\mu_i\bar{\mu}_j)(\mu_k\bar{\mu}_\ell)dA.
\ene
Using this notation, the {\WP} {\ct} formula on {\TS} becomes
\be\label{ct2}
R_{i\bar{j}k\bar{\ell}} = (i\bar{j},k\bar{\ell}) + (i\bar{\ell},k\bar{j}).
\ene
\end{Def}
\subsection{The {\WP} {\co} on {\TS}}
We now introduce the {\rco} for the {\wpm} on {\TS} $\Tcal_g(S)$. Note that this is a matrix of the real order 
$(6g-6)^2\times (6g-6)^2$, whose diagonal entries are the {\sect}s.

Let $U$ be a neighborhood of $X$ in {\TS} $\Tcal_g(S)$, and we have $\{t_{1},t_{2},...,t_{3g-3}\}$ as a local 
holomorphic coordinate on $U$, where $t_{i}=x_{i}+\textbf{i}y_{i} (1\leq i \leq 3g-3)$. Then 
$\{x_{1},x_{2},...,x_{3g-3},y_{1},y_{2},...,y_{3g-3}\}$ forms a real smooth coordinate in $U$, and 
\beq
\ppl{}{x_i} =\ppl{}{t_i}+\ppl{}{\bar{t}_i}, \ \ \ \ppl{}{y_i} =\textbf{i}(\ppl{}{t_i}-\ppl{}{\bar{t}_i}).
\eeq

Let $T\Tcal_g(S)$ be the real tangent bundle of $\Tcal_g(S)$ and $\wedge^{2}T\Tcal_g(S)$ be the exterior 
wedge product of $T\Tcal_g(S)$ and itself. For any $X \in U$, we have
\beq
T_{X}\Tcal_g(S)=Span\{\ppl{}{x_i}(X),\ppl{}{y_i}(X)\}_{1\leq i,j\leq3g-3},
\eeq
and
\be\label{wedge}
\wedge^{2}T_X\Tcal_g(S)= Span\{\ppl{}{x_i}\wedge \ppl{}{x_j}, \ppl{}{x_k}\wedge \ppl{}{y_{\ell}},
\ppl{}{y_m}\wedge \ppl{}{y_n}\}.
\ene
\begin{Def}\label{co}
The {\WP} {\co} $\Q$ on {\TS} is defined on $\wedge^{2}T\Tcal_g(S)$ by 
\beq
\Q(V_1\wedge V_2, V_3\wedge V_4) = R(V_1, V_2, V_3, V_4),
\eeq
where $V$'s are tangent vectors at $X$, and $R$ is the {\ct}.
\end{Def}
If we take a real {\ob} $\{e_i\}_{i = 1, 2, \cdots, 6g-6}$ for $T_X\Tcal_g(S)$, and set 
$R_{ijk\ell}=\langle R(e_i,e_j)e_k,e_\ell\rangle$, then 
\beq
\wedge^{2}T_X\Tcal_g(S)= Span\{e_i\wedge e_j\}_{1\le i<j\le (6g-6)},
\eeq
and the {\co} $\Q: \wedge^{2}T_X\Tcal_g(S)\rightarrow\wedge^{2}T_X\Tcal_g(S)$, for real coefficients $a_{ij}$, 
can be expressed as follows:
\be\label{Q1}
\Q(\sum\limits_{1\le i<j\le (6g-6)}a_{ij}e_i\wedge e_j) = 
\sum\limits_{1\le i< j\le (6g-6)}\sum\limits_{1\le k<\ell\le (6g-6)}a_{ij}R_{ijk\ell}e_k\wedge e_\ell.
\ene
In \cite{Wu14} the second named author proved the {\co} $\Q$ is non-positively definite on {\TS}. 
Further analysis on $\Q$ was studied in \cite{WW15}. We will generalize 
this fundamental operator to the setting of the {\UTS} and reveal some geometric features for the {\wpm} on the 
{\UTS}. 
\subsection{The {\UTS} and its Hilbert structure}
Introduced by Bers (\cite{Ber65}), the {\UTS} $T(1)$ is a central subject for the theory of univalent functions. 
It contains all {\TS}s $\Tcal_g(S)$ of closed surfaces which are complex submanifolds.

Recall that every {\RS} (or hyperbolic structure) $X$ on a closed surface $S$ is quotient of the Poincar\'e disk 
with a Fuchsian group $\Gamma$: $X = \D\backslash\Gamma$. Previously in \S 2.1, we have {\TS} 
$\Tcal_g(S)$ isomorphic to a quotient space $\Bcal^{-1,1}(X)\backslash\sim$, where $\Bcal^{-1,1}(X)$ is the space 
of bounded {\Bd}s on $X$ with super-norm less than one, and two such {\Bd}s are equivalent if the unique 
quasiconformal maps induced by them between the extended complex plane coincide on the unit circle.

Let us set up some notations before we proceed. Letting $\Gamma$ be the identity group, we work in the Poincar\'e 
disk $\D$, we have similarly with \S2.1:
\ben
\item
$\Acal^{-1,1}(\D)$: the space of bounded functions on $\D$. 
\item
$\Bcal^{-1,1}(\D)$: the unit ball of $\Acal^{-1,1}(\D)$, namely,
\beq
\Bcal^{-1,1}(\D) = \{\mu(z)\in \Acal^{-1,1}(D): \|\mu\|_{\infty} = sup_{z\in\D}|\mu(z)|<1 \}.
\eeq
\item
We will need two spaces of holomorphic functions on $\D$, both are analog to the space $Q(X)$, the space 
of {\hqd}s on $X$. Let us define
\be\label{Ainf}
A_\infty(\D) = \{q(z): \bar\partial q =0, \|q\|_{\infty} = sup_{z\in\D}\frac{|q(z)|}{\rho(z)}<\infty \},
\ene
where $\rho(z) = \frac{4}{(1-|z|^2)^2}dzd\bar{z}$ is the {\hym} on $\D$. This is the space of {\hf}s on $\D$ with 
finite super-norm defined within \eqref{Ainf}.

We also define 
\be\label{A2}
A_2(\D) = \{q(z): \bar\partial q =0, \|q\|_2^2 = \int_\D \frac{|q(z)|^2}{\rho(z)}|dz|^2<\infty \}.
\ene
This is the space of {\hf}s on $\D$ with finite $L^2$-norm defined within \eqref{A2}.
\item
For the notion of generalized ``harmonic {\Bd}s" on $\D$, we also have two spaces to introduce:
\be\label{OmegaD}
\Od= \{\nu(z)\in \Acal^{-1,1}(\D): \nu = \frac{\bar{q}}{\rho(z)}\  \text{for some}\  q \in A_\infty(\D)\},
\ene
and 
\be\label{HD}
\hd= \{\nu(z)\in \Acal^{-1,1}(\D): \nu = \frac{\bar{q}}{\rho(z)}\  \text{for some}\  q \in A_2(\D)\}.
\ene
\een
\begin{Def}
The {\UTS} $T(1) = \Bcal^{-1,1}(\D)\backslash\sim$, where $\mu \sim \nu \in \Bcal^{-1,1}(\D)$ if and only if 
$w_\mu = w_\nu$ on the unit circle, and $w_\mu$ is the unique quasiconformal map between extended complex 
planes which fixes the points $-1,-i,1$, and solves the Beltrami equation $f_{\bar{z}} = \mu f_z$. 
\end{Def}
At any point in the {\UTS} $T(1)$, the {\cts} is naturally identified with the Banach space $A_\infty(\D)$ 
defined in \eqref{Ainf}, and the {\ts} is identified with the space $\Od$ defined in \eqref{OmegaD}. It 
is then clear the Petersson pairing of functions in the space $\Od$ is not well-defined. However, for 
any $\mu, \nu \in \hd\subset \Od$, we write the Petersson pairing as the following inner 
product: 
\be\label{pair}
\langle\mu,\nu\rangle = \int_\D\mu\bar{\nu}\rho(z)|dz|^2.
\ene
Then this defines a Hilbert structure on the {\UTS} $T(1)$, introduced in (\cite{TT06}), namely, $T(1)$ 
endowed with this inner product, becomes an infinite dimensional complex manifold and Hilbert space. We denote 
this Hilbert manifold $T_H(1)$, which consists of all the points of the {\UTS} $T(1)$, with tangent space identified 
as $\hd$, a sub-Hilbert space of the Banach space $\Od$. We call the resulting metric from \eqref{pair} the 
{\it {\wpm}} on $T_H(1)$. The space we are dealing with is still very complicated: in the corresponding topology 
induced from the inner product above, the {\Hm} $T_H(1)$ is a disjoint union of uncountably many components (\cite{TT06}).

One of the most important tools for us is the {\Gf} for the operator $D$ on the disk. We abuse our notation to denote the operator 
$D = -2(\Delta_\rho-2)^{-1}$ and $G(z,w)$ its {\Gf}, where $\Delta_\rho$ is the Laplace operator on the 
Poincar\'e disk $\D$. Let us organize some properties we will use later into the following proposition.
\bpro\cite{Hej76}\label{greenpro}
The {\Gf} $G(z,w) $ satisfies the following properties:
\ben
\item Positivity: $G(z,w) > 0$ for all $z, w \in \D$;
\item Symmetry: $G(z,w) = G(w,z)$ for all $z, w \in \D$;
\item Unit hyperbolic area: $\int_{\D}G(z,w)dA(w) = 1$ for all $z \in \D$;
\item We denote $BC^{\infty}(\D)$ the space of bounded smooth functions on $\D$, then for $\forall f(z) \in BC^{\infty}(\D)$, 
\be\label{green2}
D(f)(z) = \int_{w\in \D}G(z,w)f(w)dA(w). 
\ene
Moreover, $D(f) \in BC^{\infty}(\D)$.
\een
\epo
\subsection{The {\co} on the {\UTS}} We have defined the {\Hm} $T_H(1)$ and its Riemannian 
metric \eqref{pair} for its {\ts} $\hd$ which we will work with for the rest of the paper, let us now generalize the 
concept of the {\co} (Definition ~\ref{co}) for {\TS} to $T_H(1)$. This has been done in more abstract settings, 
see for instance [pages 238-239, \cite{Lan99}] or \cite{Duc13}.

We work in the Poincar\'e disk $\D$. On one hand, without Fuchsian group action, we are forced to deal with an 
infinite dimensional space of certain functions, on the other hand, the {\hym} is explicit. This leads to some 
explicit calculations that one can take advantage of. First we note that the {\ts} $\hd$ has an explicit {\ob}: we set, 
$n \ge 2$,  
\be\label{ob}
\mu_{n-1} = \frac{(1-|z|^2)^2}{4}\sqrt{\frac{2n^3-2n}{\pi}}{\bar{z}}^{n-2}.
\ene
\bl\label{teobasis}\cite{TT06,Teo09}
The set $\{\mu_i\}_{i \ge 1}$ forms an {\ob} {\wrt} the {\wpm} on $H^{-1,1}(\D)$.
\el
Moreover, Takhtajan-Teo established the {\ct} formula for the {\wpm} on $T_H(1)$, which takes the same form 
as Tromba-Wolpert's formula for {\TS} of closed surfaces:
\bt\cite{TT06}
For $\{\mu\}$'s in $H^{-1,1}(\D)$, the Riemannian {\ct} for the {\wpm} \eqref{pair} is given by: 
\be\label{ct3}
R_{i\bar{j}k\bar{\ell}} = \int_\D D(\mu_i\bar{\mu}_j)(\mu_k\bar{\mu}_\ell)dA + 
\int_\D D(\mu_i\bar{\mu}_\ell)(\mu_k\bar{\mu}_j)dA. 
\ene
Here we abuse our notation to use $D=-2(\Delta_\rho-2)^{-1}$, where $\Delta_\rho$ is the Laplace 
operator on the Poincar\'e disk $\D$. 
\et
 
Let $U$ be a neighborhood of $p \in T_H(1)$ and $\{t_{1},t_{2},\cdots\}$ be a local holomorphic 
coordinate system on $U$ such that $\{t_{i}(p)=\mu_{i}\}_{i \geq 1}$ is orthonormal at $p$, where $\mu_i$'s 
are explicitly defined in \eqref{ob}, we write $t_{i}=x_{i}+\textbf{i}y_{i} \ \ (i \geq 1)$, then 
$\{x_{1},y_{1},x_2,y_2, \cdots\}$ is a real smooth coordinate system in $U$, and we have:
\beq
\ppl{}{x_i} =\ppl{}{t_i}+\ppl{}{\bar{t}_i}, \ \ \ \ppl{}{y_i} =\textbf{i}(\ppl{}{t_i}-\ppl{}{\bar{t}_i}).
\eeq
Let $TT_H(1)$ be the real tangent bundle of $T_H(1)$ and $\wedge^{2}TT_H(1)$ be the exterior wedge 
product of $TT_H(1)$ and itself. For any $p \in U$, we have
\beq
T_{p}T_H(1)=Span\{\ppl{}{x_i}(p),\ppl{}{y_j}(p)\}_{ i.j\geq 1},
\eeq
and
\beq
\wedge^{2}TT_H(1)=Span\{\ppl{}{x_i}\wedge\ppl{}{x_j}, \ppl{}{x_k}\wedge \ppl{}{y_\ell}, 
\ppl{}{y_m}\wedge \ppl{}{y_n}\}.
\eeq
Following Lang [Chapter 9, \cite{Lan99}], we define:
\begin{Def}
The {\WP} {\co} $\Q:\wedge^{2}TT_H(1)\rightarrow \wedge^{2}TT_H(1)$ is given as
\beq 
\Q(V_1\wedge V_2,V_3\wedge V_4)=R(V_1,V_2,V_3,V_4),
\eeq 
and extended linearly, where $V_{i}$ are real tangent vectors, and $R$ is the {\ct} for the {\wpm}.
\end{Def}
It is easy to see that $\Q$ is a bilinear symmetric form.
\section{Non-Positive Definiteness and Zero Level Set}
In this section, we prove the first part of Theorem ~\ref{mt-1}:
\bt\label{npd}
The operator $\Q$ is {\npd}.
\et
The strategy of our proof is the most direct approach, namely, lengthy but careful calculations using explicit nature of 
both the {\hym} on $\D$, and the {\ob} on $\hd$ given by \eqref{ob}. We will verify the theorem by 
calculating with various combinations of bases elements, then extend bilinearly. We follow closely the argument 
in the proof of Theorem 1.1 in \cite{Wu14}, which was inspired by calculations in \cite{LSY08}. 
\subsection{Preparation}
Note that the version of the operator $D = -2(\Delta-2)^{-1}$ on a closed surface is positive and self-adjoint 
on $L^2(X=\D\backslash\Gamma)$, and it plays a fundamental role in {\Tt}, but in the case of $\D$, the operator $D = -2(\Delta_\rho-2)^{-1}$ is 
noncompact, therefore we have to justify several properties carefully. 
\bpro\label{D1}
We have the following:
\ben
\item
The operator $D = -2(\Delta_\rho-2)^{-1}$ is self-adjoint on $L^2(\D)\cap BC^{\infty}(\D)$;
\item
For any $f \in L^2(\D)\cap BC^{\infty}(\D)$, we have also $D(f) \in L^2(\D)\cap BC^{\infty}(\D)$;
\item
The operator $D = -2(\Delta_\rho-2)^{-1}$ is positive on $L^2(\D)\cap BC^{\infty}(\D)$.
\een
\epro
\bp 
(i). For all $f, h \in L^2(\D)\cap BC^{\infty}(\D)$, we have 
\beqar
\int_\D D(f)h dA(z) &=& \int_\D \int_\D G(z,w)f(w)dA(w) h(z)dA(z) \\
&=& \int_\D \int_\D G(z,w)h(z)dA(z) f(w)dA(w) \\
&=& \int_\D fD(h) dA(w).
\eeqar
(ii). From Proposition ~\ref{greenpro}, we know $D(f), D(f^2)\in BC^{\infty}(\D)$. Using the positivity of the {\Gf} $G(z,w)$, 
and $\int_\D G(z,w)dA(w) = 1$, we estimate with the {\csi}:
\beqar
\int_\D |D(f(z))|^2 dA(z)  &=& \int_\D \{\int_\D|G(z,w)f(w)| dA(w)\}^2dA(z) \\
&\le& \int_\D(\int_\D |G(z,w)f^2(w)| dA(w))\bullet (1)dA(z)\\
&=& \int_D D(|f^2|)dA(z) \\
&=& \int_D |f|^2dA(z).
\eeqar
The last step we used self-adjointness of $D$ and the fact that $D(1) = 1$. This proves $D(f) \in L^2(\D)$.

(iii) Given any real function $f \in L^2(\D)\cap BC^{\infty}(\D)$, let us denote $u = D(f)$ and by (ii) above, it also lies in 
$L^2(\D)\cap BC^{\infty}(\D)$, then $f = -\frac12(\Delta_\rho -2)u$, and 
\beqar
\int_\D D(f)f dA &=& -\frac12\int_\D u(\Delta_\rho -2)u dA \\
&=& \int_\D u^2 dA - \frac12\int_\D u\Delta_\rho udA \\
&\ge &\int_\D u^2 dA \ge 0.
\eeqar
Here we used that $\Delta_\rho$ is negative definite on $L^2(\D)\cap BC^{\infty}(\D)$ (\cite{Hej76}).

The case when $f$ is complex valued can be proved similarly after working on real and imaginary parts separately.
\ep
Recall that $\{\mu_1, \mu_2, \cdots\}$ forms an {\ob} for $\hd$, where $\mu_i$'s are given explicitly in 
\eqref{ob}. Using the coordinate system described in \S2.4, we have 
\beq
\wedge^{2}TT_H(1)=Span\{\ppl{}{x_i}\wedge\ppl{}{x_j}, \ppl{}{x_k}\wedge \ppl{}{y_\ell}, 
\ppl{}{y_m}\wedge \ppl{}{y_n}\}.
\eeq
Naturally we will work with these three combinations. Let us define a few terms to simplify our calculations:
\ben
\item
Consider $\sum\limits_{i j}{a_{ij}\ppl{}{x_i}\wedge \ppl{}{x_j}}$, where $a_{ij}$ are real. We denote
\be\label{F}
F(z,w)=\sum_{i,j\ge1}{a_{ij}\mu_{i}(w)\cdot \overline{\mu_{j}(z)}}.
\ene
\item
The Green's function of the operator $D$: $G(z,w) = G(w,z)$.
\item
Consider $\sum\limits_{i j}{b_{ij}\ppl{}{x_i}\wedge \ppl{}{y_j}}$, where $b_{ij}$ are real. We denote
\be\label{H}
H(z,w)=\sum_{i,j\ge1}{b_{ij}\mu_{i}(w)\cdot \overline{\mu_{j}(z)}}.
\ene
\een
There are three types of basis elements in $\wedge^{2}TT_H(1)$, however, in terms of the {\co}, we only 
have to work with the first two types because of the next lemma:

\bl\label{reduction}
We have the following:

 \ben
\item $\Q(\ppl{}{x_i}\wedge\ppl{}{x_j}, \ppl{}{y_k}\wedge \ppl{}{y_\ell}) = \Q(\ppl{}{x_i}\wedge\ppl{}{x_j}, \ppl{}{x_k}\wedge \ppl{}{x_\ell}).$ \\

\item $\Q(\ppl{}{x_i}\wedge\ppl{}{y_j}, \ppl{}{y_k}\wedge \ppl{}{y_\ell})= \Q(\ppl{}{x_i}\wedge\ppl{}{y_j}, \ppl{}{x_k}\wedge \ppl{}{x_\ell}).$ 
  \een

\el
\bp
The {\wpm} on the {\Hm} $T_H(1)$ is K\"ahler-Einstein (\cite{TT06}), therefore its associated complex 
structure $\J$ is an isometry on the {\ts} $\hd$ such that ${\J} \ppl{}{x_i} = \ppl{}{y_i}$ and ${\J}^2 = -\text{id}$. 
Now it is easy to verify: 
\beqar
\Q(\ppl{}{x_i}\wedge\ppl{}{x_j}, \ppl{}{y_k}\wedge \ppl{}{y_\ell}) &=&
R(\ppl{}{x_i},\ppl{}{x_j}, \J\ppl{}{x_k},\J\ppl{}{x_\ell})\\
&=& R(\ppl{}{x_i},\ppl{}{x_j}, \ppl{}{x_k},\ppl{}{x_\ell}) \\&=&
\Q(\ppl{}{x_i}\wedge\ppl{}{x_j}, \ppl{}{x_k}\wedge \ppl{}{x_\ell}).
\eeqar
The other equality is proved similarly.
\ep
\subsection{Proof of Theorem ~\ref{npd}} The curvature tensor in equation \eqref{ct3} has two terms. Set
\[(ij,kl):=\int_{\D}D(\mu_i \overline{\mu}_j)(\mu_k \overline{\mu}_l)dA.\]
Thus, the curvature tensor satisfies
\be
R_{i\bar{j}k\bar{\ell}}=(ij,kl)+(il,kj).
\ene
\quad

\bp [Proof of Theorem ~\ref{npd}]
We write 
\be\label{AB}
A = \sum\limits_{i j}{a_{ij}\ppl{}{x_i}\wedge \ppl{}{x_j}}, \ \ 
B=\sum\limits_{i j}{b_{ij}\ppl{}{x_i}\wedge \ppl{}{y_j}}.
\ene
By Lemma ~\ref{reduction}, we only have to show $\Q(A+B, A+B) \le 0$. We pause to give an 
expression for $\Q(A+B, A+B)$. Since $\Q (A, B) = \Q(B,A)$, we have
\beq
\Q(A+B,A+B) = \Q(A,A) + 2\Q(A,B)+ \Q(B,B).
\eeq
Now we work with these terms.

\bl \label{3.4-1}
Using above notations, we have 
\beqar
\Q(A,A)&=& -4\int_{\D}D(\Im F(z,z))(\Im F(z,z))dA(z) \\
&\ \ \ \ +& 2\Re\{\iint_{\D\times\D}G(w,z)F(z,w)F(w,z)dA(w)dA(z)\} \\
&\ \ \ \ -&2\iint_{\D\times\D}G(w,z)|F(z,w)|^2dA(w)dA(z).
\eeqar
\el
\bp [Proof of Lemma \ref{3.4-1}]
First we recall the {\WP} {\ct} formula \eqref{ct3}, notation in \eqref{sim}, 
and $\ppl{}{x_i} = \ppl{}{t_i} + \ppl{}{\bar{t}_i}$, then we take advantage of the Green's function $G(z,w)$ 
for $D$ and the fact that $D$ is self-adjoint on $L^2(\D)\cap BC^{\infty}(\D)$ to calculate as follows:
\beqar
\Q(A,A)&=&\sum_{i,j,k,\ell}a_{ij}a_{k\ell}(R_{i\bar{j}k\bar{\ell}}+R_{i\bar{j}\bar{k}\ell}+
R_{\bar{i}jk\bar{\ell}}+R_{\bar{i}j\bar{k}\ell})\\
&=& \sum_{i,j,k,\ell}a_{ij}a_{k\ell}((i\bar{j},k\bar{\ell})+(i\bar{\ell},k\bar{j})-(i\bar{j},\ell\bar{k})-(i\bar{k},\ell\bar{j})\\
&\ \ \ \ -&(j\bar{i},k\bar{\ell})-(j\bar{\ell},k\bar{i})+(j\bar{i},\ell\bar{k})+(j\bar{k},\ell\bar{i})) \\
&=& \sum_{i,j,k,\ell}a_{ij}a_{k\ell}(i\bar{j}-j\bar{i},k\bar{\ell}-\ell\bar{k})+ 
\sum_{i,j,k,\ell}a_{ij}a_{k\ell}((i\bar{\ell},k\bar{j})+(\ell\bar{i},j\bar{k}))\\
&\ \ \ \ -& \sum_{i,j,k,\ell}a_{ij}a_{k\ell}((i\bar{k},\ell\bar{j})+(j\bar{\ell},k\bar{i}))\\
&=& \int_{\D}D(F(z,z)-\overline{F(z,z)})(F(z,z) - \overline{F(z,z)})dA(z) \\
&\ \ \ \ +& 2 \Re\{\sum_{i,j,k,\ell}a_{ij}a_{k\ell}(i\bar{\ell},k\bar{j})\} -2\Re\{\sum_{i,j,k,\ell}a_{ij}a_{k\ell}(i\bar{k},\ell\bar{j})\}\\
&=& -4\int_{\D}D(\Im F(z,z))(\Im F(z,z))dA(z) \\
&\ \ \ \ +& 2 \Re\{\sum_{i,j,k,\ell}a_{ij}a_{k\ell}(i\bar{\ell},k\bar{j})\} -2\Re\{\sum_{i,j,k,\ell}a_{ij}a_{k\ell}(i\bar{k},\ell\bar{j})\}.
\eeqar

For the second term in the equation above,
\begin{small}
\begin{eqnarray*}
&& \Re\{\sum_{i,\ell}a_{ij}a_{k\ell}((i\overline{\ell},k\overline{j})\}\\
&=& \Re\{\int_{\D} D(\sum_{i}a_{ij}\mu_{i}\overline{\mu_{\ell}})(\sum_{k}a_{k\ell}\mu_{k}\overline{\mu_{j}})dA(z)\}\\
&=&  \Re\{\iint_{\D \times \D}G(w,z)\sum_{i}a_{ij}\mu_{i}(w)\overline{\mu_{\ell}(w)}(\sum_{k}a_{k\ell}\mu_{k}(z)\overline{\mu_{j}}(z))dA(z)dA(w)\}\\
&=& \Re\{\int_{\D\times \D}G(z,w)F(z,w)F(w,z)dA(w)dA(z)\}.
\end{eqnarray*}
\end{small}

Similarly, we have 
\begin{small}
\begin{eqnarray*}
 \Re\{\sum_{i,\ell}a_{ij}a_{k\ell}((i\overline{k},\ell\overline{j})\}=\iint_{\D\times\D}G(w,z)|F(z,w)|^2dA(w)dA(z).
\end{eqnarray*}
\end{small}
Then, the lemma follows by the equations above.
\ep

\bl \label{3.4-2}
Using above notations, we have 
\beqar
\Q(B,B)&=& -4\int_{\D}D(\Re H(z,z))(\Re H(z,z))dA(z) \\
&\ \ \ \ -&2\Re\{\iint_{\D\times\D}G(w,z)H(z,w)H(w,z)dA(w)dA(z)\} \\
&\ \ \ \ -&2\iint_{\D\times\D}G(w,z)|H(z,w)|^2dA(w)dA(z).
\eeqar
\el
\bp [Proof of Lemma \ref{3.4-2}]
Using $\ppl{}{y_i} = \bi(\ppl{}{t_i}- \ppl{}{\bar{t}_i})$, we have 
\beqar
&&\Q(B,B) =\Q(\sum_{ij}b_{ij}\frac{\partial}{\partial x_{i}}\wedge \frac{\partial}{\partial y_{j}},\sum_{ij}b_{ij}\frac{\partial}{\partial x_{i}}\wedge \frac{\partial}{\partial y_{j}})\\
&=& -\sum_{i,j,k,\ell}b_{ij}b_{k\ell}(R_{i\overline{j}k\overline{\ell}}-R_{i\overline{j}\overline{k}\ell}-R_{\overline{i}jk\overline{\ell}}+R_{\overline{i}j\overline{k}\ell})\\
&=& -\sum_{i,j,k,\ell}b_{ij}b_{k\ell}(R_{i\overline{j}k\overline{\ell}}+R_{i\overline{j}\ell\overline{k}}+R_{j\overline{i}k\overline{\ell}}+R_{j\overline{i}\ell\overline{k}})\\
&=& -\sum_{i,j,k,\ell}b_{ij}b_{k\ell}((i\overline{j},k\overline{\ell})+(i\overline{\ell},k\overline{j})+(i\overline{j},\ell\overline{k})+(i\overline{k},\ell\overline{j}) \\
&\ \ \ \ \ +& (j\overline{i},k\overline{\ell})+(j\overline{\ell},k\overline{i})+(j\overline{i},\ell\overline{k})+(j\overline{k},\ell\overline{i}))\\
&=& -\sum_{i,j,k,\ell}b_{ij}b_{k\ell}(i\overline{j}+j\overline{i},k\overline{\ell}+\ell\overline{k})- \sum_{i,j,k,\ell}b_{ij}b_{k\ell}((i\overline{\ell},k\overline{j})+(\ell\overline{i},j\overline{k}))\\
&\ \ \ \ \ -& \sum_{i,j,k,\ell}b_{ij}b_{k\ell}((i\overline{k},\ell\overline{j})+(j\overline{\ell},k\overline{i})).
\eeqar
Let us work with these three terms. For the first one, we have 
\beqar
&& -\sum_{i,j,k,\ell}b_{ij}b_{k\ell}(i\overline{j}+j\overline{i},k\overline{\ell}+\ell\overline{k}) \\
&=& -\int_{X}D(\sum_{ij}b_{ij}\mu_{i}\overline{\mu_{j}}+\sum_{ij}b_{ij}\mu_{j}\overline{\mu_{i}})
(\sum_{ij}b_{ij}\mu_{i}\overline{\mu_{j}}+\sum_{ij}b_{ij}\mu_{j}\overline{\mu_{i}})dA(z)\\
&=& -\int_{X}{D(H(z,z)+\overline{H(z,z)})(H(z,z)+\overline{H(z,z)})}dA(z)\\
&=& -4\int_{\D}D(\Re H(z,z))(\Re H(z,z))dA(z). 
\eeqar
For the second term, using the same argument in calculating $\Q(A,A)$ above, we have
\beqar
&& - \sum_{i,j,k,\ell}b_{ij}b_{k\ell}((i\overline{\ell},k\overline{j})+(\ell\overline{i},j\overline{k})) \\
&=& -2\Re\{\iint_{\D\times\D}G(w,z)H(z,w)H(w,z)dA(w)dA(z)\}.
\eeqar
While similarly the third term yields
\beqar
&&-\sum_{i,j,k,\ell}b_{ij}b_{k\ell}((i\overline{k},\ell\overline{j})+(j\overline{\ell},k\overline{i})) \\
&=& -2\iint_{\D\times\D}G(w,z)|H(z,w)|^2dA(w)dA(z).
\eeqar
Thus,
\beqar
\Q(B,B)&=& -4\int_{\D}D(\Re H(z,z))(\Re H(z,z))dA(z) \\
&\ \ \ \ -&2\Re\{\iint_{\D\times\D}G(w,z)H(z,w)H(w,z)dA(w)dA(z)\} \\
&\ \ \ \ -&2\iint_{\D\times\D}G(w,z)|H(z,w)|^2dA(w)dA(z).
\eeqar
\ep

We are left to deal with the final expression $\Q(A,B)$.
\bl \label{3.4-3}
Using above notations, we have 
\beqar
\Q(A,B)&=& -4\int_{\D}{D(\Im F(z,z))(\Re H(z,z))}dA(z) \\
&\ \ \ \ \ -& 2\Im\{\iint_{\D\times\D}{G(z,w)F(z,w)H(w,z)}dA(w)A(z)\} \\
&\ \ \ \ \ -& 2\Im\{\iint_{\D\times\D}{G(z,w)F(z,w)\overline{H(z,w)}}dA(w)A(z)\}. 
\eeqar
\el
\bp [Proof of Lemma \ref{3.4-3}]
\beqar
\Q(A,B) &=&(-\bi)\sum_{i,j,k,\ell}a_{ij}b_{k\ell}(-R_{i\bar{j}k\bar{\ell}}+R_{i\bar{j}\bar{k}\ell}
-R_{\bar{i}jk\bar{\ell}}+R_{\bar{i}j\bar{k}\ell})\\
&=&(-\bi)\sum_{i,j,k,\ell}a_{ij}b_{k\ell}\{-(i\bar{j},k\bar{\ell})-(i\bar{\ell},k\bar{j})-(i\bar{j},\ell\bar{k})-(i\bar{k},\ell\bar{j})\\
&\ \ \ \ \  +&(j\bar{i},k\bar{\ell})+(j\bar{\ell},k\bar{i})+(j\bar{i},\ell\bar{k})+(j\overline{k},\ell\bar{i})\} \\
&=&(-\bi)\sum_{i,j,k,\ell}a_{ij}b_{k\ell}(j\bar{i}-i\bar{j},k\bar{\ell}+\ell\bar{k})\\
&\ \ \ \ \ +& (-\bi)\sum_{i,j,k,\ell}a_{ij}b_{k\ell}(-(i\bar{\ell},k\bar{j})+(\ell\bar{i},j\bar{k}))\\
&\ \ \ \ \ +& (-\bi)\sum_{i,j,k,\ell}a_{ij}b_{k\ell}(-(i\bar{k},\ell\bar{j})+(j\bar{\ell},k\bar{i}))\\
&=& -\bi(-2\bi)\int_{\D}{D(\Im F(z,z))(2\Re H(z,z))}dA(z) \\
&\ \ \ \ \ +& (-\bi)(-2\bi)\Im\{\iint_{\D\times\D}{G(z,w)F(z,w)H(w,z)}dA(w)A(z)\} \\
&\ \ \ \ \ +& (-\bi)(-2\bi)\Im\{\iint_{\D\times\D}{G(z,w)F(z,w)\overline{H(z,w)}}dA(w)A(z)\} \\
&=& -4\int_{\D}{D(\Im F(z,z))(\Re H(z,z))}dA(z) \\
&\ \ \ \ \ -& 2\Im\{\iint_{\D\times\D}{G(z,w)F(z,w)H(w,z)}dA(w)A(z)\} \\
&\ \ \ \ \ -& 2\Im\{\iint_{\D\times\D}{G(z,w)F(z,w)\overline{H(z,w)}}dA(w)A(z)\}. 
\eeqar
\ep

\bpro [Formula for curvature operator] \label{expression}
Using above notations, we have 
\begin{small}
\beqar
&&\Q(A+B,A+B) =\\
&-&4\int_{\D}{D (\Im\{F(z,z)+\bi H(z,z)\})\cdot(\Im\{F(z,z)+\bi H(z,z)\})dA(z)}\\
&-&2\iint_{\D\times \D}G(z,w)|F(z,w)+\bi H(z,w))|^2dA(w)dA(z)\\
&+&2\Re\{\iint_{\D\times \D}G(z,w)(F(z,w)+\bi H(z,w))(F(w,z)+\bi H(w,z))dA(w)dA(z)\},
\eeqar
\end{small}
where $F(z,w)$ and $H(z,w)$ are defined in \eqref{F} and \eqref{H} respectively, and $G(z,w)$ is 
the {\Gf} for the operator $D$.
\epro
\bp [Proof of Proposition ~\ref{expression}]
It follows from Lemma \ref{3.4-1}, \ref{3.4-2} and \ref{3.4-3} that
\begin{eqnarray*}
&& \Q(A+B,A+B)\\
&=&(\int_{\D}D(F(z,z)-\overline{F(z,z)})(F(z,z)-\overline{F(z,z)})dA(z)\\
             &-&\int_{\D}{D(H(z,z)+\overline{H(z,z)})(H(z,z)+\overline{H(z,z)})}dA(z)\\
             &+&2\textbf{i} \cdot \int_{\D}{D(F(z,z)-\overline{F(z,z)})(H(z,z)+\overline{H(z,z)})}dA(z))\\
            (&-&2\cdot \int_{\D\times \D}G(z,w)|F(z,w))|^2dA(w)dA(z)\\
             &-&2\cdot \int_{\D\times \D}G(z,w)|H(z,w))|^2dA(w)dA(z)\\            
             &-&4\cdot \Im\{\int_{\D\times \D}G(z,w)F(z,w)\overline{H(z,w)})dA(w)dA(z)\})\\
             (&+&2\cdot \Re\{\int_{\D\times \D}G(z,w)F(z,w)F(w,z)dA(w)dA(z)\}\\
             &-&2\cdot \Re\{\int_{\D\times \D}G(z,w)H(z,w)H(w,z)dA(w)dA(z)\}\\
             &-&4\cdot \Im\{\int_{\D\times \D}G(z,w)F(z,w)H(w,z)dA(w)dA(z)\}).
\end{eqnarray*}
The sum of the first three terms is exactly
$$
-4\int_{\D}{D(\Im\{F(z,z)+\textbf{i}H(z,z)\})\cdot(\Im\{F(z,z)+\textbf{i}H(z,z)\})dA(z)}.
$$

Just as $|a+\textbf{i}b|^2=|a|^2+|b|^2+2\cdot \Im(a\cdot \overline{b})$, where $a$ and $b$ are two complex numbers, the sum of the second three terms is
exactly
$$
-2\cdot \int_{\D\times \D}G(z,w)|F(z,w)+\textbf{i}H(z,w))|^2dA(w)dA(z).
$$

For the last three terms, since 
\begin{eqnarray*}
\Im(F(z,w)\cdot H(w,z))=-\Re(F(z,w)\cdot(\textbf{i}H(w,z))),
\end{eqnarray*} 
the sum is exactly 

\begin{small}
$$
2\cdot \Re\{\int_{\D\times \D}G(z,w)(F(z,w)+\textbf{i}H(z,w))(F(w,z)+\textbf{i}H(w,z))dA(w)dA(z)\}.
$$
\end{small}
The proof is complete.
\ep

Now we continue with the proof of Theorem ~\ref{npd}. By Proposition ~\ref{expression}, there are three integrals in the 
expression of $\Q(A+B,A+B)$. We first work with the last two terms 
by the {\csi} and the fact that $G(z,w)=G(w,z)$ to find:
\begin{small}
\beqar
&&|\iint_{\D\times \D}G(z,w)(F(z,w)+\bi H(z,w))(F(w,z)+\bi H(w,z))dA(w)dA(z)| \\
&\le& \iint_{\D\times \D}G(z,w)|(F(z,w)+\bi H(z,w))(F(w,z)+\bi H(w,z))|dA(w)dA(z)\\
&\le& \{\iint_{\D\times \D}G(z,w)|F(z,w)+\bi H(z,w)|^2 dA(w)dA(z)\}^{\frac12}\\
&\ \ \ \ \ \times& \{\iint_{\D\times \D}G(w,z)|F(w,z)+\bi H(w,z)|^2 dA(w)dA(z)\}^{\frac12}\\
&=& \iint_{\D\times \D}G(w,z)|F(z,w)+\bi H(z,w)|^2 dA(z)dA(w).
\eeqar
\end{small}
Thus, we have
\bpro \label{m-pro}
$$\Q(A+B,A+B)\leq -4\int_{\D}D(\Im F(z,z))(\Im F(z,z))dA(z).$$
\epro

Theorem ~\ref{npd} follows directly from Proposition ~\ref{D1} and ~\ref{m-pro}. 
\ep

\subsection{Zero level set}
In order to apply Theorem ~\ref{npd} to more geometrical situations later, we determine the zero level set for the operator $\Q$.

First let us define an action on $\wedge^2TT_H(1)$. Recall our explicit {\ob} from \eqref{ob}:
\beq
\mu_{n-1} = \frac{(1-|z|^2)^2}{4}\sqrt{\frac{2n^3-2n}{\pi}}{\bar{z}}^{n-2}, \ \ \ n\ge 2.
\eeq
For any point $P \in T_H(1)$, let $\{\ppl{}{t_j}\}_{j\ge 1}$ be the vector field on $T_H(1)$ near $P$ such that $\ppl{}{t_j}|_P = \mu_j$, 
and we write $t_j = x_j+\bi y_j$, then the complex structure $\J$ associated with the {\wpm} is an isometry on the {\ts} $\hd$ with 
${\J} \ppl{}{x_i} = \ppl{}{y_i}$ and ${\J}^2 = -\text{id}$. This naturally extends to an action, which we abuse our notation to denote 
it by $\J$, on $\wedge^2TT_H(1)$. 
\begin{Def}\label{J}
The action $(\J, \circ)$ is defined as follows on a basis:
\beqar
\begin{cases}
\textbf{J}\circ\ppl{}{x_i}\wedge \ppl{}{x_j}:=\ppl{}{y_i}\wedge \ppl{}{y_j},\\
\textbf{J}\circ\ppl{}{x_i}\wedge \ppl{}{y_j}:=-\ppl{}{y_i}\wedge \ppl{}{x_j}=\ppl{}{x_j}\wedge \ppl{}{y_i},\\
\textbf{J}\circ\ppl{}{y_i}\wedge \ppl{}{y_j}:=\ppl{}{x_i}\wedge \ppl{}{x_j},
\end{cases}
\eeqar
and we extend it linearly. 
\end{Def}
\bl
We have $\J\circ\J=\text{id}$. Moreover, 
\be\label{act}
\langle\ppl{}{x_i}, \ppl{}{x_j}\rangle(P) = \langle\ppl{}{y_i}, \ppl{}{y_j}\rangle(P) = \delta_{ij}, \quad\ 
\langle\ppl{}{x_i}, \ppl{}{y_j}\rangle(P) = 0.
\ene
\el
\bp
The identity $\J\circ\J=\text{id}$ is clear by definition. We only show the first equality in \eqref{act}:
\beqar
\langle\ppl{}{x_i}, \ppl{}{x_j}\rangle(P) &=& \Re\{\langle\ppl{}{x_i}+
\bi\J\ppl{}{x_i}, \ppl{}{x_j}+\bi\J\ppl{}{x_j}\rangle(P)\}\\
&=& \Re\{\langle\ppl{}{\mu_i}, \ppl{}{\mu_j}\rangle(P) \}=\delta_{ij}.
\eeqar
\ep
Now we treat the equality case for $\Q \le 0$, namely, 
\bt\label{zero}(= (ii) of Theorem \ref{mt-1})
For $C \in \wedge^2TT_H(1)$, $\Q(C, C) = 0$ if and only if there is an element $E \in \wedge^2TT_H(1)$ such that $C = E - \J\circ E$, 
where $\J\circ$ is defined above. 
\et
\bp
One direction is straightforward: If $C = E - \J\circ E$ for some $E \in \wedge^2TT_H(1)$, we have that $\Q(C, C) = 0$ since $\J$ is an 
isometry on tangent space.

Conversely, let $C \in \wedge^2TT_H(1)$ with $\Q(C, C) = 0$. We write 
$$C=\sum_{ij}a_{ij}\frac{\partial}{\partial x_{i}}\wedge \frac{\partial}{\partial x_{j}}
+b_{ij}\frac{\partial}{\partial x_{i}}\wedge \frac{\partial}{\partial y_{j}}+ c_{ij}\frac{\partial}{\partial y_{i}}\wedge \frac{\partial}{\partial y_{j}}.$$
Applying the identities in the proof of Lemma ~\ref{reduction}, we have 
\be\label{QCC}
\Q(C,C) =\Q(\sum_{ij}d_{ij}\frac{\partial}{\partial x_{i}}\wedge \frac{\partial}{\partial x_{j}}
+b_{ij}\frac{\partial}{\partial x_{i}}\wedge \frac{\partial}{\partial y_{j}},\sum_{ij}d_{ij}\frac{\partial}{\partial x_{i}}\wedge \frac{\partial}{\partial x_{j}}
+b_{ij}\frac{\partial}{\partial x_{i}}\wedge \frac{\partial}{\partial y_{j}}), 
\ene
where $d_{ij}=a_{ij}+c_{ij}$. This enables us to write $C = A+B$, where 
$$A = \sum\limits_{i j}{d_{ij}\ppl{}{x_i}\wedge \ppl{}{x_j}}, \ \ B=\sum\limits_{i j}{b_{ij}\ppl{}{x_i}\wedge \ppl{}{y_j}}.$$
From the proof of $\Q(A+B,A+B) \le 0$ in Proposition ~\ref{expression}, we find that $Q(A+B,A+B)=0$ if and only if there exists a constant $k$ 
such that both of the following hold:
\begin{eqnarray*}
\begin{cases}
\Im\{F(z,z)+\textbf{i}H(z,z)\}=0, \\
F(z,w)+\textbf{i}H(z,w)=k\cdot\overline{(F(w,z)+\textbf{i}H(w,z))}.
\end{cases}
\end{eqnarray*}
Setting $z=w$, we find $k=1$. Therefore the second equation above implies 
\begin{eqnarray*}
\sum_{ij}(d_{ij}-d_{ji}+\textbf{i}(b_{ij}+b_{ji}))\mu_{i}(w)\overline{\mu_{j}}(z)=0.
\end{eqnarray*} 
Since $\{\mu_{i}\}_{i\geq 1}$ is a basis,
\begin{eqnarray*}
d_{ij}=d_{ji},\quad \quad b_{ij}=-b_{ji}.
\end{eqnarray*}
That is,
\beq
a_{ij}-a_{ji}=-(c_{ij}-c_{ji}), \ \ \ b_{ij}=-b_{ji}.
\eeq
We now define 
\be\label{E}
E=\sum_{ij}a_{ij}\frac{\partial}{\partial x_{i}}\wedge \frac{\partial}{\partial x_{j}}+\frac{b_{ij}}{2}\frac{\partial}{\partial x_{i}}\wedge \frac{\partial}{\partial y_{j}},
\ene
and we verify that $C=E-\textbf{J}\circ E$. Indeed, first we have 
\beq
\sum_{ij} a_{ij}\frac{\partial}{\partial x_{i}}\wedge \frac{\partial}{\partial x_{j}}=\sum_{i<j}(a_{ij}-a_{ji})\frac{\partial}{\partial x_{i}}\wedge \frac{\partial}{\partial x_{j}},
\eeq
then we apply the definition of $\J\circ$ in Definition ~\ref{J} to find
\beqar
\J\circ \sum_{ij}a_{ij}\frac{\partial}{\partial x_{i}}\wedge \frac{\partial}{\partial x_{j}}&=&\sum_{i<j}(a_{ij}-a_{ji})\frac{\partial}{\partial y_{i}}\wedge \frac{\partial}{\partial y_{j}}\\
&=&-\sum_{i<j}(c_{ij}-c_{ji})\frac{\partial}{\partial y_{i}}\wedge \frac{\partial}{\partial y_{j}}\\
&=&-\sum{c_{ij}\frac{\partial}{\partial y_{i}}\wedge \frac{\partial}{\partial y_{j}}}.
\eeqar
Similarly, 
\beq
\J\circ\sum(\frac{b_{ij}}{2}\frac{\partial}{\partial x_{i}}\wedge \frac{\partial}{\partial y_{j}})=-\sum\frac{b_{ij}}{2}\frac{\partial}{\partial x_{i}}\wedge \frac{\partial}{\partial y_{j}}.
\eeq
This completes the proof.

\ep


\section{Boundedness}
If we denote $\langle\cdot,\cdot\rangle$ the pairing of vectors in the space $\wedge^2TT_H(1)$. This 
natural inner product on $\wedge^2TT_H(1)$ associated to the {\wpm} on $T_H(1)$ is given as (following 
\cite{Lan99}): $\forall V_i \in \hd$,
\be\label{eu}
\langle V_1\wedge V_2, V_3\wedge V_4\rangle_{eu} = 
\langle V_1, V_3\rangle \langle V_2, V_4\rangle - \langle V_1, V_4\rangle \langle V_2, V_3\rangle.
\ene

The goal of this section is to prove Theorem ~\ref{mt-2}, namely,
\bt\label{bd}(=Theorem \ref{mt-2})
The {\co} $\Q$ is bounded, i.e., for any $V \in \wedge^2TT_H(1)$ with $\|V\|_{eu} =1$, we have 
$|\Q(V,V)|\le 16\sqrt{\frac{3}{\pi}}$, where $\|\cdot\|_{eu}$ is the Euclidean norm for the wedge product defined in \eqref{eu}. 
\et
We follow the idea for the proof of Theorem 1.3 in \cite{WW15}.
\subsection{Technical lemmas}
Let us begin with a useful lemma.
\bl
Let $D=-2(\Delta_\rho-2)^{-1}$ as above above. Then, for any complex-valued function $f\in L^{2}(\D)\cap BC^{\infty}(\D)$,
\be\label{Dff}
\int_{\D}{(D(f)\bar{f})dA}\leq \int_{\D}{|f|^2 dA},
\ene
where $dA = \rho|dz|^2$ is the hyperbolic area element for $\D$.
\el
\bp
Recall the {\Gf} of $D$ on $\D$ is $G(z,w)$, such that, $\forall f \in L^2(\D, \C)$, we have 
\be\label{greenD}
D(f)(z) = \int_{w\in\D}G(z,w)f(w)dA(w).
\ene
Assuming first $f$ is real valued, we apply the {\csi} and the symmetry of the {\Gf}:
\begin{small}
\beqar
&&\int_{\D}{D(f(z))f(z)dA(z)}\\
&=& \iint_{\D\times\D}{\{G(z,w)f(w)dA(w)\}f(z)dA(z)}\\
&\le& \sqrt{\iint_{\D\times\D}{G(z,w)f^2(w)dA(w)dA(z)}}\cdot\sqrt{\iint_{\D\times\D}{G(z,w)f^2(z)dA(z)dA(w)}} \\
&=& \sqrt{\iint_{\D}{D(f^2(w))dA(w)}}\cdot\sqrt{\int_{\D}{D(f^2(z))dA(z)}}\\
&=& \int_{\D}{D(f^2(z))dA(z)} \\
&=& \int_{\D}f^2(z)dA(z).
\eeqar
\end{small}
When $f$ is complex valued, we can write $f = f_1 + \bi f_2$, where $f_1$ and $f_2$ are real-valued. Then 
using $D$ is self-adjoint (Proposition ~\ref{D1}), we find,
\beqar
\int_{\D}{(D(f)\bar{f})dA} &=& \int_{\D}{(D(f_1)f_1dA} + \int_{\D}{(D(f_2)f_2dA} \\
&\le& \int_{\D}|f_1|^2 + |f_2|^2 dA \\
&=& \int_{\D}{|f|^2dA}.
\eeqar
\ep
Recalling from \eqref{F} and \eqref{H}, we write $F(z,z) = \sum\limits_{ij} a_{ij}\mu_i(z)\bar{\mu}_j(z)$ for the expression 
$A = \sum\limits_{i,j\ge 1}{a_{ij}\ppl{}{x_i}\wedge \ppl{}{x_j}}$, and 
$H(z,z) = \sum\limits_{ij} b_{jj}\mu_i(z)\bar{\mu}_j(z)$ for the expression $B = \sum\limits_{i,j\ge 1}{b_{ij}\ppl{}{x_i}\wedge \ppl{}{y_j}}$, 
where $\{\mu_i\}_{i \ge 1}$ is the {\ob} \eqref{ob} for $\hd$. By Lemma ~\ref{reduction}, to show Theorem ~\ref{bd}, it suffices to work with $V= A+B$.
\bl\label{QVV}
Under above notation, we have the following estimate:
\begin{small}
\begin{eqnarray}\label{bd-qvv}
|\Q(V,V)| &\le& 8\cdot(\int_{\D}|F(z,z)|^2dA+ \int_{\D}|H(z,z)|^2dA) \nonumber\\
&\ \ \ \ \ \ \ +& 4\iint_{\D\times \D}G(z,w)|F(z,w)+\bi H(z,w))|^2dA(w)dA(z).
\end{eqnarray}
\end{small}
\el
\bp
The expression for $\Q(V,V)$ is shown in Proposition ~\ref{expression}. We have
\begin{small}
\beqar
&&|\Q(V,V)| \\
&\le& 4\int_{\D}{D (\Im\{F(z,z)+\bi H(z,z)\})\cdot(\Im\{F(z,z)+\bi H(z,z)\})dA(z)}\\
&\ \ \ \ +&2\iint_{\D\times \D}G(z,w)|F(z,w)+\bi H(z,w))|^2dA(w)dA(z)\\
&\ \ +&2\Re\{\iint_{\D\times \D}G(z,w)(F(z,w)+\bi H(z,w))(F(w,z)+\bi H(w,z))dA(w)dA(z)\}.
\eeqar
\end{small}
We work with these terms. First we apply Lemma ~\ref{Dff}, and the triangle inequality to find:
\beqar
&&4\int_{\D}{D (\Im\{F(z,z)+\bi H(z,z)\})\cdot(\Im\{F(z,z)+\bi H(z,z)\})dA(z)} \\
&\ \ \ \ \ \le& 4\int_{\D}{|\Im\{F(z,z)+\bi H(z,z)\}|^2dA(z)}\\
&\ \ \ \ \ \le& 8(\int_{\D}{|\{F(z,z)|^2dA+\int_\D|H(z,z)\}|^2dA}).
\eeqar
Recalling from the end of the proof of Theorem ~\ref{npd}, we have established the following inequality:
\begin{small}
\beqar
&&|\iint_{\D\times \D}G(z,w)(F(z,w)+\bi H(z,w))(F(w,z)+\bi H(w,z))dA(w)dA(z)| \\
&\le& \iint_{\D\times \D}G(w,z)|F(z,w)+\bi H(z,w)|^2 dA(z)dA(w).
\eeqar
\end{small}
Now \eqref{bd-qvv} follows.
\ep
We will also quote a Harnack type inequality for any $\mu(z) \in \hd$. We note that it works in our favor 
that the injectivity radius of $\D$ is infinity.

\bpro\cite{TT06, Teo09}\label{harnack}
Let $\mu \in \hd$. Then the  $L^\infty$-norm of $\mu \in \hd$ can be estimated from above by its {\WP} norm, namely, for all 
$\mu \in \hd$, we have 
\be\label{harnack2}
sup_{z\in\D}|\mu(z)| \le \sqrt{\frac{3}{4\pi}}\|\mu\|_{WP}.
\ene
\epro
We also derive the following estimate which is quite general, and we formulate it to the 
following lemma:
\bl\label{K}
Let $\{z\}, \{w\}$ two complex coordinates on $\D$, if a converging series is in the form of 
\beq
K(z,w) = \sum\limits_{i,j\ge 1} d_{ij}\mu_i(w)\overline{\mu_j(z)},
\eeq
for some $d_{ij} \in \R$, where $\{\mu_j\}$ is the {\ob} \eqref{ob} on $\hd$, then we have 
\be\label{K1}
\int_\D|K(z,z)|^2dA(z) \le \sqrt{\frac{3}{4\pi}}\sum\limits_{i,j\ge 1}d_{ij}^2, 
\ene
and
\be\label{K2}
sup_{w\in\D}|K(z,w)|^2  \le \sqrt{\frac{3}{4\pi}}\sum\limits_{i,j, \ell\ge 1}d_{ij}d_{i\ell}\overline{\mu_j(z)}\mu_\ell(z).
\ene
\el
\bp We use the standard technique for this type of argument, namely, since 
\beq
|K(z,z)|^2 \le sup_{w\in\D}|K(z,w)|^2,
\eeq
we will try to use one complex coordinate against the other. Fixing $z$, we note that the form $K(z,w)$ is a 
harmonic {\Bd} on $\D$ in the coordinate $w$. Indeed, 
\beq
K(z,w) = \sum\limits_{i\ge 1}\{\sum\limits_{j\ge 1}d_{ij}\overline{\mu_j(z)}\}\mu_i(w).
\eeq
This enables us to apply \eqref{harnack2}:
\begin{small}
\beqar
sup_{w\in\D}|K(z,w)|^2 &\le& \sqrt{\frac{3}{4\pi}}\int_\D K(z,w)\overline{K(z,w)}dA(w)\\
&=& \sqrt{\frac{3}{4\pi}}\int_\D \{\sum\limits_{i,j}d_{ij}\mu_i(w)\overline{\mu_j(z)}\}
\{\sum\limits_{k,\ell}d_{k\ell}\overline{\mu_k(w)}\mu_\ell(z)\}dA(w)\\
&=& \sqrt{\frac{3}{4\pi}}\{\sum\limits_{i,j,\ell}d_{ij}d_{i\ell}\overline{\mu_j(z)}\mu_\ell(z)\}.
\eeqar
\end{small}
We also used the basis $\{\mu_j\}$ is orthonormal {\wrt} the {\wpm}. Therefore we have  
\beqar
\int_\D|K(z,z)|^2dA(z) &\le& \int_\D sup_{w\in\D}|K(z,w)|^2dA(z)\\
 &\le& \sqrt{\frac{3}{4\pi}}\int_\D \sum\limits_{i,j,\ell}d_{ij}d_{i\ell}\overline{\mu_j(z)}\mu_\ell(z)dA(z)\\
 &=& \sqrt{\frac{3}{4\pi}}\sum\limits_{i,j}d_{ij}^2.
\eeqar
\ep


\subsection{$\Q$ is bounded}
We now prove the boundedness. 
\bp[Proof of Theorem ~\ref{bd}]
We find via the definition of the Euclidean inner product \eqref{eu} and the symmetric properties of the {\ct}: 
\be\label{delta1}
\langle\ppl{}{x_i}\wedge \ppl{}{x_j}, \ppl{}{x_k}\wedge \ppl{}{x_\ell}\rangle_{eu}(P)=
\delta_{ik}\delta_{j\ell}-\delta_{i\ell}\delta_{jk},
\ene
and
\be\label{delta1-1}
\langle\ppl{}{y_i}\wedge \ppl{}{y_j}, \ppl{}{y_k}\wedge \ppl{}{y_\ell}\rangle_{eu}(P)=
\delta_{ik}\delta_{j\ell}-\delta_{i\ell}\delta_{jk},
\ene
and 
\be\label{delta2}
\langle\ppl{}{x_i}\wedge \ppl{}{y_i}, \ppl{}{x_j}\wedge \ppl{}{y_j}\rangle_{eu}(P)=\delta_{ij},
\ene
and 
\be\label{delta3}
\langle\ppl{}{x_i}\wedge \ppl{}{x_j}, \ppl{}{x_k}\wedge \ppl{}{y_\ell}\rangle_{eu}(P)=0.
\ene

We now denote $V = A + B+ C \in \wedge^2TT_H(1)$, where 
$A = \sum\limits_{i<j}{a_{ij}\ppl{}{x_i}\wedge \ppl{}{x_j}}$, 
$B = \sum\limits_{i,j\ge 1}{b_{ij}\ppl{}{x_i}\wedge \ppl{}{y_j}}$, and
$C = \sum\limits_{i<j}{c_{ij}\ppl{}{y_i}\wedge \ppl{}{y_j}}$. Then by \eqref{delta1}, \eqref{delta1-1}, \eqref{delta2}, and 
\eqref{delta3}, we have 

\be
\langle A, A\rangle_{eu}(P)= \sum\limits_{i<j}a_{ij}^2,  \ \ \ 
 \langle B, B\rangle_{eu}(P)= \sum\limits_{i,j}b_{ij}^2, \ \ \  \langle C, C\rangle_{eu}(P)=\sum\limits_{i<j}c_{ij}^2.
\ene 
and
\be
\langle A, B\rangle_{eu}(P)=0, \ \ \  \langle A, C\rangle_{eu}(P)=0, \ \ \ 
 \langle B, C\rangle_{eu}(P)=0.
\ene 

Assume that $\|V\|_{eu} =1$, that is
\[\sum\limits_{i<j}a_{ij}^2+\sum\limits_{i,j}b_{ij}^2+\sum\limits_{i<j}c_{ij}^2=1. \]

Recalling from Lemma ~\ref{reduction} and ~\ref{QVV}, we have:
\begin{small}
\beqar
|\Q(V,V)| &\le& 8\cdot(\int_{\D}|F(z,z)|^2dA+ \int_{\D}|H(z,z)|^2dA) \\
&\ \ \ \ \ \ \ +& 4\iint_{\D\times \D}G(z,w)|F(z,w)+\bi H(z,w))|^2dA(w)dA(z).
\eeqar
Where $F(z,w) = \sum\limits_{i<j}(a_{ij}+c_{ij})\overline{\mu_j(z)}\}\mu_i(w)$ and $
H(z,w) = \sum\limits_{i,j\ge 1}b_{ij}\overline{\mu_j(z)}\}\mu_i(w).$
\end{small}
We now estimate these two terms. Note that both our forms $F(z,w)$ (\eqref{F}) and $H(z,w)$ (\eqref{H}) are of the 
type in Lemma ~\ref{K}. Since both series $\sum\limits_{i<j}a_{ij}^2$ and $\sum\limits_{i,j}b_{ij}^2$ 
converge, we have the first term in \eqref{bd-qvv} bounded from above as follows:
\begin{small}
\beqar
8\cdot(\int_{\D}|F(z,z)|^2dA+ \int_{\D}|H(z,z)|^2dA) &\le& 8\sqrt{\frac{3}{4\pi}}( \sum\limits_{i<j}(a_{ij}+c_{ij})^2+  \sum\limits_{i,j}b_{ij}^2)\\
&\leq& 8\sqrt{\frac{3}{4\pi}} \times 2 (\sum\limits_{i<j}a_{ij}^2+\sum\limits_{i,j}b_{ij}^2+\sum\limits_{i<j}c_{ij}^2) \\
&=& 8\sqrt{\frac{3}{\pi}}.
\eeqar
\end{small}
 The second term is also bounded by applying \eqref{K2}. To see this, we use the fact that $\{\mu_i\}$'s form an {\ob}, 
 \beqar
&&\iint_{\D\times \D}G(z,w)|F(z,w)|^2dA(z)dA(w) \\
&\le& \iint_{\D\times \D}G(z,w)sup_{w\in\D}(|F(z,w)|^2)dA(z)dA(w)\\
&\le& \iint_{\D\times \D}G(z,w)\sqrt{\frac{3}{4\pi}}\sum_{i<j}\sum\limits_{\ell\ge 1}(a_{ij}+c_{ij})(a_{i\ell}+c_{i\ell})\overline{\mu_j(z)}\mu_\ell(z)dA(z)dA(w)\\
&=& \int_{\D}D(\sqrt{\frac{3}{4\pi}}\sum_{i<j}\sum\limits_{\ell\ge 1}(a_{ij}+c_{ij})(a_{i\ell}+c_{i\ell})\overline{\mu_j(z)}\mu_\ell(z))dA(w)\\
&=& \int_{\D}\sqrt{\frac{3}{4\pi}}\sum_{i<j}\sum\limits_{\ell\ge 1}(a_{ij}+c_{ij})(a_{i\ell}+c_{i\ell})\overline{\mu_j(z)}\mu_\ell(z)dA(w) \\
&=& \sqrt{\frac{3}{4\pi}}\sum\limits_{i<j}(a_{ij}+c_{ij})^2.
\eeqar
Similar argument yields $$\iint_{\D\times \D}G(z,w)|H(z,w)|^2dA(z)dA(w) \le \sqrt{\frac{3}{4\pi}}\sum\limits_{i,j}b_{ij}^2.$$ 
Therefore the second term can be estimated as follows:
\beqar
&&4\iint_{\D\times \D}G(z,w)|F(z,w)+\bi H(z,w))|^2dA(w)dA(z) \\
&\le& 8\iint_{\D\times \D}G(z,w)|F(z,w)|^2dA(z)dA(w)\\
&\ \ \ \ \ \ \ \ \ \ \ \ \ \ \  +& 8\iint_{\D\times \D}G(z,w)|H(z,w)|^2dA(z)dA(w)\\
&\le&8\sqrt{\frac{3}{4\pi}}( \sum\limits_{i<j}(a_{ij}+c_{ij})^2+  \sum\limits_{i,j}b_{ij}^2) \\
&\leq& 8\sqrt{\frac{3}{\pi}}.
\eeqar
Combining with earlier same upper bound for the first term, we find $$|\Q(V,V)| \le 16\sqrt{\frac{3}{\pi}}.$$ 
Proof is now complete.
\ep

\section{Noncompactness}
In this section, we treat the question about the compactness.
\bt\label{noncpt}(=Theorem \ref{mt-3})
The {\co} $\Q$ is not a compact operator. 
\et

We will prove this theorem by contradiction. First we proceed with several technical lemmas.

\bl\label{4-1}
For any harmonic Beltrami differential $\mu \in L^2(\D)$ on $\D$, we have
\be
D(|\mu|^2)\geq \frac{|\mu|^2}{3}.
\ene
\el
\bp
The argument here is motivated by Lemma 5.1 in \cite{Wlf12} which is for the case of a closed Riemann surface.


Recall that the curvature of a metric expressed as $\sigma(z)|dz|^2$ on a Riemannian 2-manifold is given by
$$K(\sigma(z)|dz|^2)=-\frac{1}{2}\Delta_{\sigma}\ln(\sigma(z)),$$
where $\Delta_{\sigma}$ is the Laplace-Beltrami operator of $\sigma(z)|dz|^2$.

Suppose that $p \in \D$ with $|\mu|\neq 0$. By definition one may assume that $$|\mu(z)|=\frac{|\Phi(z)|}{\rho(z)}$$ where $\Phi(z)$ is 
holomorphic on $\D$. Let $\Delta$ be the Laplace-Beltrami operator of $\rho(z)|dz|^2$. Then using the curvature information that 
$K(\rho(z)|dz|^2)=-1$ and $K(|\Phi(z)||dz|^2)(p)=0$, we see that at $p\in \D$,
\beq
\Delta \ln \frac{|\Phi(p)|^2}{\rho^2(p)}=-4.
\eeq

On the other hand, at $p\in \D$ we have
\beq
\Delta \ln \frac{|\Phi(p)|^2}{\rho^2(p)}=\frac{\Delta \frac{|\Phi(p)|^2}{\rho^2(p)}}{\frac{|\Phi(p)|^2}{\rho^2(p)}}- 
\frac{|\nabla \frac{|\Phi(p)|^2}{\rho^2(p)}|^2}{\frac{|\Phi(p)|^4}{\rho^4(p)}}.
\eeq

Thus, at $p\in \D$ with $\mu(p) \not= 0$, we have
\be
\Delta \frac{|\Phi(p)|^2}{\rho^2(p)} \geq -4 \frac{|\Phi(p)|^2}{\rho^2(p)}.
\ene

If $p \in \D$ with $|\mu(p)|=0$, the maximum principle gives that
\beq
\Delta \frac{|\Phi(p)|^2}{\rho^2(p)} \geq 0= -4 \frac{|\Phi(p)|^2}{\rho^2(p)}.
\eeq

Therefore, we have
\be
 \Delta \frac{|\Phi(z)|^2}{\rho^2(z)} \geq -4 \frac{|\Phi(z)|^2}{\rho^2(z)}, \quad \ \  \forall z \in \D.
 \ene

Rewrite it as
\beq 
(\Delta-2) \frac{|\Phi(z)|^2}{\rho^2(z)} \geq -6 \frac{|\Phi(z)|^2}{\rho^2(z)}, \quad \forall z \in \D.
\eeq

Since the operator $D=-\frac{1}{2}(\Delta-2)^{-1}$ is positive on $\D$, the conclusion follows.
\ep
We also need the following elementary estimate:
\bl\label{4-3} 
For all positive integer $m\in \mathbb{Z}^+$, we have 
\be
\int_{0}^{1}{(1-r)^6\cdot r^m}dr\geq \frac{45}{2^{17}m^7}.
\ene
\el

\bp
The integral is the well-known beta function. Since $m$ is a positive integer,
\begin{eqnarray*}
\int_{0}^{1}{(1-r)^6\cdot r^m}dr &=& \frac{6! \cdot m!}{(m+7)!} \\
&\geq & \frac{6!}{(8m)^7} \\
&= & \frac{45}{2^{17}m^7}.
\end{eqnarray*}
\ep

We denote
\be\label{Ai}
A_i=\frac{1}{2^{\frac{i}{2}}}(\sum_{k=2^i}^{2^{i+1}-1}\frac{\partial}{\partial x_{k}}\wedge \frac{\partial}{\partial y_{k}}).
\ene 

\bpro\label{4-4}
For all $i$ large enough, we have 
\beq
-\Q(A_i,A_i)\geq 2^{-30}.
\eeq
\epro
\bp
We first observe that, from the definition \eqref{eu} of the inner product on $\wedge^2TT_H(1)$,  
$$
\langle A_i,A_j\rangle=\delta_{ij}.
$$
We wish to estimate $\Q(A_i,A_i)$ according to Proposition ~\ref{expression}. For $A_{i}$ in above \eqref{Ai}, by the definitions in \eqref{F} 
and \eqref{H}, the corresponding expressions $F$ and $H$ in Proposition \ref{m-pro} satisfy that
$$F(z,z)=0$$ and $$H(z,z)=\frac{1}{2^{\frac{i}{2}}}(\sum_{k=2^i}^{2^{i+1}-1} |\mu_k(z)|^2)$$
where $\mu_k(z)= \frac{(1-|z|^2)^2}{4}\sqrt{\frac{2(k+1)^3-2(k+1)}{\pi}}{\bar{z}}^{k-1}$.

It follows from Proposition ~\ref{expression} that
\begin{eqnarray*}
-\Q(A_i,A_i)&=&\frac{4}{2^i} \int_{\D}D(\sum_{k=2^i}^{2^{i+1}-1} |\mu_k(z)|^2) \cdot (\sum_{k=2^i}^{2^{i+1}-1} |\mu_k(z)|^2) \rho(z)|dz|^2\\
&\geq& \frac{4}{3\cdot 2^i} \int_{\D}(\sum_{k=2^i}^{2^{i+1}-1} |\mu_k(z)|^2)^2 \rho(z)|dz|^2,
\end{eqnarray*}
where we apply Lemma \ref{4-1} for the last inequality.

Now we apply the explicit expression of $$\mu_k(z)= \frac{(1-|z|^2)^2}{4}\sqrt{\frac{2(k+1)^3-2(k+1)}{\pi}}{\bar{z}}^{k-1},$$ and the fact that 
$2(k+1)^3-2(k+1)\geq (k+1)^3$,
\begin{small}
\begin{eqnarray*}
-\Q(A_i,A_i)&\geq& \frac{4}{3\cdot 2^i} \sum_{2^i\leq k, j \leq 2^{i+1}-1} \int_{\D}\frac{(k+1)^3(j+1)^3}{4^3\cdot \pi^2}(1-|z|^2)^6 |z|^{2k+2j-4}|dz|^2\\
&=& \frac{2\pi}{48\pi^2\cdot 2^i} \sum_{2^i\leq k, j \leq 2^{i+1}-1}(k+1)^3(j+1)^3 \int_0^{1}(1-r^2)^6r^{2k+2j-4}rdr \\
&=&  \frac{1}{48\pi\cdot 2^i} \sum_{2^i\leq k, j \leq 2^{i+1}-1}(k+1)^3(j+1)^3 \int_0^{1}(1-r)^6r^{k+j-2} dr \\
&\geq & \frac{1}{48\pi\cdot 2^i} \cdot \frac{45}{2^{17}}\sum_{2^i\leq k, j \leq 2^{i+1}-1} \frac{(k+1)^3(j+1)^3}{(k+j-2)^{7}} \quad \textit{(by Lemma \ref{4-3})} \\
&\geq & \frac{1}{48\pi\cdot 2^i} \cdot \frac{45}{2^{17}}\sum_{2^i\leq k, j \leq 2^{i+1}-1} \frac{(k+1)^3(j+1)^3}{4^7\cdot 2^{7i}},
\end{eqnarray*}
\end{small}
where the last inequality follows by $k+j-2\leq  2\cdot(2^{i+1}-1)-2 \leq 4\cdot 2^i$.

We simplify it as
\begin{eqnarray*}
-\Q(A_i,A_i)&\geq & \frac{15}{\pi\cdot 2^{35}} \cdot  \frac{(\sum_{2^i\leq k \leq 2^{i+1}-1}(k+1)^3)^2}{ 2^{8i}}
\end{eqnarray*}

It follows by an elementary formula
$$1^3+2^3+3^3+\cdots +m^3=(m(m+1))^2/4$$
that we have 
\beq
\sum_{2^i\leq k \leq 2^{i+1}-1}(k+1)^3=\frac{15}{4}\cdot 2^{4i}+O(2^{3i}).
\eeq
In particular, for large enough $i$ we may assume that
$$\sum_{2^i\leq k \leq 2^{i+1}-1}(k+1)^3>3\cdot 2^{4i}.$$

Therefore, for large enough $i$ we have
\beq
-\Q(A_i,A_i)\geq \frac{135}{\pi\cdot 2^{35}}>2^{-30}.
\eeq
This completes the proof.
\ep
Let us define $L:=Span\{A_n,A_{n+1},\cdots, A_{2n-1}\}$ which is an $n$-dimensional linear subspace in $\wedge^{2}TT_H(1)$ 
and $P_{L}:\wedge^{2}TT_H(1) \to L$ be the projection map. It is clear that $\langle A_k,A_\ell\rangle=\delta_{k\ell}$.
\bl\label{4-5} For the map
\[P_{L}\circ \Q: L \to L,\]
we have the following:
\ben
\item
$P_{L}\circ \Q$ is self-adjoint. 
\item
$P_{L}\circ \Q$ is non-positive definite. 
\item
$\sup_{A \in L, ||A||_{eu}=1} -\langle P_{L}\circ \Q(A),A\rangle\leq 16 \sqrt{\frac{3}{\pi}}$.
\een
\el
\bp
(i). Let $A, B \in L$. Then $P_L(A)=A$ and $P_L(B)=B$. Since $P_L$ is self-adjoint, we find 
\begin{eqnarray*}
\langle P_{L}\circ \Q(A),B\rangle&=&\langle\Q(A), P_L (B)\rangle\\
&=&\langle\Q(A),B\rangle=\langle A,\Q(B)\rangle\\
&=&\langle P_L(A),\Q(B)\rangle\\
&=&\langle A,P_L \circ \Q(B)\rangle.
\end{eqnarray*}

Thus, $P_L\circ \Q$ is self-adjoint.\\

(ii). For all $A\in L$,
\begin{eqnarray*}
\langle P_{L}\circ \Q(A),A\rangle&=&\langle\Q(A), P_L (A)\rangle\\
&=&\langle\Q(A),A\rangle \leq 0,
\end{eqnarray*}
where we apply the non-positivity of $\Q$ in the last inequality. Thus, $P_L\circ \Q$ is non-positive definite.\\

(iii). For all $A\in L$ with $||A||_{eu}=1$,
\begin{eqnarray*}
-\langle P_{L}\circ \Q(A),A\rangle&=&-\langle \Q(A), P_{L}(A)\rangle\\
&=&-\langle \Q(A),A\rangle\leq  16 \sqrt{\frac{3}{\pi}},
\end{eqnarray*}
where we apply Theorem \ref{mt-2} for the last inequality.
\ep
We now prove the main theorem in this section:

\begin{proof}[Proof of Theorem \ref{noncpt}]
Assume that $\Q$ is a compact operator. Then the well-known Spectral Theorem for compact operators \cite[Theorem A.3]{KSchbook} 
guarantees that there exists a sequence of eigenvalues $\{\sigma_j\}_{j\geq 1}$ of $\Q$ with $\sigma_j \leq \sigma_{j+1}$ and $\sigma_j \to 0$ 
as $j \to \infty$.

By the Cauchy Interlacing Theorem (\cite[Proposition 12.5]{KSchbook}), we have
\[\sigma_i\leq \lambda_i, \ \forall 1\leq i \leq n,\]
where $\lambda_1\leq \lambda_2 \leq \cdots \leq \lambda_n\leq 0$ are the eigenvalues of $P_{L}\circ \Q$.

Since $\{A_i\}_{n\leq i \leq (2n-1)}$ is orthonormal, the trace of $P_{L}\circ \Q$ is
\beq
Trace(P_{L}\circ \Q)=\sum_{n\leq i \leq (2n-1)}\Q(A_i,A_i)=\sum_{1\leq i \leq n}\lambda_i.
\eeq
By the part (iii) of Lemma \ref{4-5},  we have: 
\[ \lambda_1 \geq -16 \sqrt{\frac{3}{\pi}}.\]

By Proposition \ref{4-4},

\begin{eqnarray*}
-2^{-30}\cdot n &\geq& \sum_{n\leq i \leq (2n-1)}\Q(A_i,A_i) =  \sum_{1\leq i \leq n}\lambda_i\\
&=&\sum_{i=1}^{[\sqrt{n}]-1}\lambda_i+\sum_{[\sqrt{n}]}^n \lambda_i\\
&\geq & ([\sqrt{n}]-1) \lambda_{1}+(n-[\sqrt{n}]+1)\lambda_{[\sqrt{n}]}\\
&\geq& ([\sqrt{n}]-1) (-16 \sqrt{\frac{3}{\pi}})+(n-[\sqrt{n}]+1)\lambda_{[\sqrt{n}]}.
\end{eqnarray*}

Divided by $n$ for the inequality above and for large $n>>1$, we have
\[\lambda_{[\sqrt{n}]}\leq -2^{-31}.\]

Since $ \lambda_i$ is increasing,
\[\lambda_{j}\leq -2^{-31}, \ \forall \ 1\leq j \leq [\sqrt{n}].\]

Therefore, 

\[\sigma_{j}\leq -2^{-31}, \ \forall \ 1\leq j \leq [\sqrt{n}].\]

Let $n \to \infty$, this contradicts with the fact that $\sigma_j \to 0$ as $j \to \infty$.
\end{proof}

\section{Twisted Harmonic map into $T_H(1)$}
Harmonic maps theory is an important topic in geometry and analysis. In this section, we consider harmonic maps into the {\UTS} 
with the {\WP} metric. With newly obtained curvature information about $T_H(1)$, we study harmonic maps into $T_H(1)$ and 
prove some rigidity results. We follow a similar argument as the proof of Theorem 1.2 in \cite{Wu14}. 

\begin{proof}[Proof of Theorem \ref{mt-4}]
Since the {\co} on $T_H(1)$ is non-positive definite, $T_H(1)$ also has non-positive Riemannian sectional curvature in the 
complexified sense as stated in \cite{MSY93}. Suppose that $f$ is not constant. From \cite[Theorem 2]{MSY93} (one may also see 
\cite{Cor92}), we know that $f$ is a totally geodesic immersion. We remark here that the target space in \cite[Theorem 2]{MSY93} is stated to be a finite dimensional complex manifold. Actually, the proof goes through in the case that the target space has infinite dimension, without modification. Similar arguments are applied in \cite[Section 5]{Duc15}.

On quaternionic hyperbolic manifolds $H_{Q,m}=Sp(m,1)/Sp(m)$, since $f$ is totally geodesic, we identify the image 
$f(H_{Q,m}=Sp(m,1)/Sp(m))$ with $H_{Q,m}=Sp(m,1)/Sp(m)$. We may select $p\in H_{Q,m}$. Choose a quaternionic line 
$l_{Q}$ on $T_{p}H_{Q,m}$, and we may assume that $l_{Q}$ is spanned over $\R$ by $v,Iv,Jv$ and $Kv$. Without loss of generality, 
we may assume that $J$ on $l_{Q}\subset T_{p}H_{Q,m}$ is the same as the complex structure on $T_H(1)$.

Let $Q^{H_{Q,m}}$ be the curvature operator on $H_{Q,m}$, and we choose an element 
\beq
v\wedge Jv+Kv\wedge Iv \in \wedge^2 T_{p}H_{Q,m}.
\eeq
Then we have 
\begin{eqnarray*}
& Q^{H_{Q,m}}(v\wedge Jv+Kv\wedge Iv,v\wedge Jv+Kv\wedge Iv)=\\
&R^{H_{Q,m}}(v,Jv,v,Jv)+R^{H_{Q,m}}(Kv,Iv,Kv,Iv)+2\cdot R^{H_{Q,m}}(v,Jv,Kv,Iv).
\end{eqnarray*}
Since $I$ is an isometry, we have 
\begin{eqnarray*}
R^{H_{Q,m}}(Kv,Iv,Kv,Iv)&=&R^{H_{Q,m}}(IKv,IIv,IKv,IIv)\\
&=&R^{H_{Q,m}}(-Jv,-v,-Jv,-v)\\
&=&R^{H_{Q,m}}(v,Jv,v,Jv). 
\end{eqnarray*}
Similarly,
\begin{eqnarray*}
R^{H_{Q,m}}(v,Jv,Kv,Iv)&=&R^{H_{Q,m}}(v,Jv,IKv,IIv)\\
&=&R^{H_{Q,m}}(v,Jv,-Jv,-v)\\
&=&-R^{H_{Q,m}}(v,Jv,v,Jv).
\end{eqnarray*}
Combining the terms above, we have
\beq
Q^{H_{Q,m}}(v\wedge Jv+Kv\wedge Iv,v\wedge Jv+Kv\wedge Iv)=0.
\eeq
Since $f$ is a totally geodesic immersion,
\beq
Q^{T_H(1)}(v\wedge Jv+Kv\wedge Iv,v\wedge Jv+Kv\wedge Iv)=0.
\eeq
On the other hand, by Theorem \ref{mt-1}, there exists $E \in \wedge^2TT_H(1)$ such that
\beq
v\wedge Jv+Kv\wedge Iv=E-\textbf{J}\circ E.
\eeq
Hence,
\begin{eqnarray}\label{16}
\textbf{J}\circ(v\wedge Jv+Kv\wedge Iv) &=&\textbf{J}\circ(E-\textbf{J}\circ E) \notag \\
&=&\textbf{J}\circ E-\textbf{J}\circ\textbf{J}\circ E \notag\\
&=&\textbf{J}\circ E-E \notag\\
&=&-(v\wedge Jv+Kv\wedge Iv).
\end{eqnarray}
On the other hand, since \textbf{J} is the same as $J$ in $H_{Q,m}$, we also have
\begin{eqnarray}\label{17}
&&\textbf{J}\circ(v\wedge Jv+Kv\wedge Iv)=(Jv\wedge JJv+JKv\wedge JIv)\\
\nonumber &=&Jv\wedge(-v)+Iv\wedge(-Kv)=v\wedge Jv+Kv\wedge Iv.
\end{eqnarray}

From equations (\ref{16}) and (\ref{17}) we get
\begin{eqnarray*}
v\wedge Jv+Kv\wedge Iv=0
\end{eqnarray*} 
which is a contradiction since $l_{Q}$ is spanned over $\R$ by $v,Iv,Jv$ and $Kv$.

In the case of the Cayley hyperbolic plane $H_{O,2}=F_{4}^{20}/SO(9)$, the argument is similar by 
replacing a quaternionic line by a Cayley line \cite{Cha72}. 
\end{proof}

\begin{remark}
Since $T_H(1)$ has negative sectional curvature, any symmetric space of rank $\geq 2$ can not be totally geodesically immersed in $T_H(1)$. 
The argument in the proof of Theorem \ref{mt-4} shows that the rank one symmetric spaces $Sp(m,1)$ and $F_4^{-20}$ also can not be 
totally geodesically immersed in $T_H(1)$. It would be \textsl{interesting} to study whether the remaining two noncompact rank one symmetric 
spaces $\mathbb{H}^n$ and $\mathbb{CH}^n$ can be totally geodesically immersed in $T_H(1)$ (or $Teich(S)$).
\end{remark}

\bibliographystyle{amsalpha}
\bibliography{wp}
\end{document}